\documentclass[11pt, twoside]{article}
\usepackage{fancyhdr}
\usepackage{CJK}
\usepackage[a4paper]{geometry}
\usepackage[dvips, usenames]{color}
\usepackage{titletoc}
\usepackage{latexsym}
\usepackage{amsmath}
\usepackage{amssymb}
\usepackage{multicol}
\usepackage{graphics}
\usepackage{graphicx}
\usepackage{subfigure}
\usepackage{indentfirst}
\usepackage{epsfig}
\usepackage{mathrsfs}

\usepackage{amsfonts,amsthm,amssymb}
\usepackage{amsfonts}
\usepackage{color}
\usepackage{ifpdf}
\usepackage{fancyhdr}
\usepackage{epstopdf}
\usepackage{CJK}
\usepackage{float}
\usepackage{footmisc}

\newtheorem{lem}{Lemma}[section]
\newtheorem{thm}[lem]{Theorem}
\newtheorem{cor}[lem]{Corollary}

\newtheorem{con}[lem]{Conjecture}
\newtheorem{Proposition}[lem]{Proposition}

\begin{document}

\setlength\abovedisplayskip{2pt}
\setlength\abovedisplayshortskip{0pt}
\setlength\belowdisplayskip{2pt}
\setlength\belowdisplayshortskip{0pt}

\title{\huge\  A characterization of tight ($ k, 0 $)-stable graphs
	\author{\CJKfamily{fs}\small  Yuqi Xu$^{a}$ \quad Weihua Yang$^{a}$ \quad Xiaxia Guan$^{a*}$\\
		\CJKfamily{kai}\small \textit{$^{a}$Department of Mathematics,\ Taiyuan University of Technology,}\\
		\CJKfamily{kai}\small \textit{Taiyuan, Shanxi\ 030024, China}
	}\date{}}
\maketitle \footnote{
	\textit{$ ^* $E-mail addresses}: guanxiaxia@tyut.edu.cn
	
	This work is supported by National Natural Science Foundation of China (No. 12401462 and 12371356) and the Natural Science Foundation of Shanxi Province (No. 202403021222034). }

\begin{center}\begin{minipage}{14.5cm}

{\CJKfamily{}\small \noindent {\bf Abstract}\ \ Let $ k $ and $ \ell $ be two non-negative integers with $ k>\ell $. A graph $G$ is $(k,\ell)$-stable if $\alpha(G-S) \geq \alpha(G)-\ell$ for every $S \subseteq V(G)$ with $|S|=k$, where $\alpha(G)$ denotes the independence number of $G$. Dong and Wu established that $\alpha(G) \leq \lfloor (n-k+1)/2 \rfloor + \ell$ for a $(k,\ell)$-stable graph $ G $, where $ n $ is the order of $ G $. A $(k,\ell)$-stable graph $ G $ is tight if $\alpha(G) = \lfloor (n-k+1)/2 \rfloor + \ell$. In this paper, we provide a complete characterization of tight $(k,0)$-stable graphs for $ k\geq 4 $. 
In particular, we prove that tight $(k,0)$-stable graphs are $ K_{k+1} $ and	$ K_{k+2} $ for $ k\geq 5 $, which not only extends the result of Liu, Song and Wang {\textit{[J. Graph Theory \textbf{110}(2) (2025), 193--199]}} from $ k\geq 24 $ to $ k\geq 5 $, but also proves the conjecture of Dong and Luo {\textit{[Electron. J. Comb. 32(4) (2025), 4--45]}} once more.
\vskip 2mm \noindent {$Keywords$:}\ $ (k, 0) $-stable graph; independence number }
\end{minipage}\end{center}

\vskip 0.4cm

\section{Introduction}\label{section1}

All graphs considered in this paper are finite, simple, and undirected. Given a graph $G=(V,E)$, we use $V(G)$ and $E(G)$ to denote its vertex set and edge set, respectively. A vertex subset $ S\subseteq V(G) $ is called an \textit{independent set} if $ \{u,v\}\notin E(G) $ for all distinct $ u,v\in S$. Let $\alpha(G)$ denote the \textit{independence number} of $G$, which is the size of a maximum independent set in $G$. 

The resilience of graph properties is a fundamental question in graph theory. Motivated by studies on the Erd\H{o}s--Rogers function, Dong and Wu \cite{DongWu2022} investigated the resilience of graph independence number with respect to removing vertices. For a subset $S\subseteq V(G)$, let $G-S$ denote the induced subgraph of $G$ on vertex set $V(G)- S$. For integers $k > \ell \geq 0$, a graph $G$ is said to be \emph{$(k,\ell)$-stable} if for every subset $S\subseteq V(G)$ with $|S|=k$, we have
\[
	\alpha(G-S) \geq \alpha(G) - \ell.
\]

Dong and Wu~\cite{DongWu2022} established the following upper bound on the independence number of $(k,\ell)$-stable graphs.

\begin{thm}[Dong and Wu \cite{DongWu2022}]\label{thm:DW}
	For all integers $k > \ell \geq 0$, every $(k,\ell)$-stable graph on $n$ vertices satisfies
	$$\alpha(G) \leq \left\lfloor \frac{n-k+1}{2} \right\rfloor + \ell.$$
\end{thm}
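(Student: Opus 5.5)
Write $\alpha=\alpha(G)$. The plan is to remove a carefully chosen set $S$ of $k$ vertices and then compare two things: the stability condition, which says $G-S$ still has large independent sets, and a counting bound on the vertices left over after those independent sets are accounted for.

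\textbf{Step 1: the case $\ell=0$.} If $G$ is $(k,0)$-stable, then every maximum independent set $I$ meets every $k$-set $S$, since $\alpha(G-S)\ge \alpha$ forces $G-S$ to contain a maximum independent set. Hence
\[
\Bigl|\bigcap_{I\ \text{maximum}} I\Bigr| \le |V(G)|-k .
\]
More precisely, the complement of any maximum independent set has fewer than $k$ vertices only if $n-\alpha<k$. I will instead aim directly for two disjoint maximum independent sets.

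\textbf{Step 2: find a second independent set.} Take a maximum independent set $I_1$ and any $k$-set $S\subseteq I_1$; if $\alpha<k$, take $S\supseteq I_1$. Then $G-S$ contains an independent set $I_2$ with $|I_2|\ge\alpha-\ell$.

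I iterate this to extract a structure on which the bound can be counted:
\begin{itemize}
\item Choose $S$ to contain $k$ vertices of $I_1$ when $\alpha\ge k$, or the whole of $I_1$ plus $k-\alpha$ further vertices otherwise.
\item In the first case, $I_2\cap I_1$ has size at most $\alpha-k$, so $I_2$ has at least $k-\ell$ vertices outside $I_1$.
\end{itemize}

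The cleaner route is induction on $k$. For $k=\ell+1$ the claimed bound reads $\alpha\le\lfloor (n-\ell)/2\rfloor+\ell$, equivalently $2(\alpha-\ell)\le n-\ell$ up to parity. To prove it, remove $\ell+1$ vertices of a maximum independent set $I_1$. This leaves an independent set $I_2$ of size $\alpha-\ell$, which avoids those $\ell+1$ vertices, so $|I_1\cup I_2|\ge \alpha+|I_2\setminus I_1|$.

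To bound $|I_2\setminus I_1|$, I will swap: the set $(I_1\setminus I_2)$ together with $I_2$ is bipartite. Since $I_1$ is maximum, Hall's condition gives a matching from $I_2\setminus I_1$ into $I_1\setminus I_2$; by symmetry-type maximality this also lower-bounds $|I_2\setminus I_1|$ by $|I_1\setminus I_2|-\ell$. Therefore
\[
n\ \ge\ |I_1\cup I_2|\ \ge\ \alpha + (\alpha-\ell)-|I_1\cap I_2|,
\]
where $|I_1\cap I_2|\le \alpha-(\ell+1)$.

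\textbf{Main obstacle.} The difficulty is sharpening the overlap so that the count gives $n-k+1$ rather than something weaker. The natural tool is to observe that a $(k,\ell)$-stable graph is $(k-1,\ell)$-stable minus a vertex: pick $v$ and pass to $G-v$, which is $(k-1,\ell)$-stable with $\alpha(G-v)\ge\alpha-\ell$. That loses too much, so I would instead delete a vertex of $I_1\cap I_2$ (or of every maximum independent set). Stability with $k$ then becomes stability with $k-1$ while $\alpha$ drops by exactly one and $n$ by one, which preserves the right-hand side up to floor effects. The base of this induction is the core-free situation, where no vertex lies in all maximum independent sets; there the Hall/König argument should give $2\alpha\le n+\ell$ directly.
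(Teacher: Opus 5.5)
The paper does not prove this statement; it only quotes it from Dong and Wu. Judged on its own, your proposal has a genuine gap: the inequality is never derived. The counting in Step~2 is vacuous. Inserting $|I_1\cap I_2|\le\alpha-\ell-1$ into $n\ge 2\alpha-\ell-|I_1\cap I_2|$ yields only $n\ge\alpha+1$. The Hall matching from $I_2\setminus I_1$ into $I_1\setminus I_2$ bounds $|I_2\setminus I_1|$ from \emph{above}, which is the wrong direction for a lower bound on $n$. Applying stability to a single arbitrary $S\subseteq I_1$ cannot make $I_1$ and $I_2$ disjoint; $S$ has to be chosen from the structure of $G$. In Step~1 the logic is also reversed: $(k,0)$-stability says that \emph{some} maximum independent set avoids each $k$-set. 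The correct consequence is that fewer than $k$ vertices lie in all maximum independent sets.

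The induction sketched under ``Main obstacle'' does not close either.

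\textbf{Bookkeeping.} Deleting a vertex $v$ of the core (the intersection of all maximum independent sets) keeps the right-hand side but lowers $\alpha$ by one. With the same $\ell$, the inductive hypothesis therefore returns only $\alpha\le\lfloor(n-k+1)/2\rfloor+\ell+1$. You must use that $G-v$ is actually $(k-1,\ell-1)$-stable, because $\alpha(G-v-S')\ge\alpha(G)-\ell=\alpha(G-v)-(\ell-1)$.

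\textbf{Wrong base case.} The core-free case is not a valid base when $k>\ell+1$. The target there is $2\alpha\le n-k+1+2\ell$, which is strictly stronger than $2\alpha\le n+\ell$ (and, for $k>2\ell+1$, stronger than $2\alpha\le n$). The missing step is exactly the one you dismissed as losing too much. If $I$ is a maximum independent set and $v\notin I$ (such $v$ exists, since $k>\ell$ excludes an edgeless $G$), then $\alpha(G-v)=\alpha(G)$, $G-v$ is $(k-1,\ell)$-stable, and the right-hand side is unchanged. Iterating this reduces everything to $k=\ell+1$, where one needs $2\alpha\le n+\ell$.

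\textbf{Unproved core.} That base case is the real content, and you only assert it (``should give''). Your core idea does handle it. A core vertex exists only if $\ell\ge1$, and deleting it gives an $(\ell,\ell-1)$-stable graph, so induction on $\ell$ yields $2(\alpha-1)\le(n-1)+(\ell-1)$. If the core is empty, you still need the lemma that $2\alpha\le n$. For this, take a maximum independent set $I$. If Hall's condition for matching $I$ into $V(G)\setminus I$ fails, pick a minimal $A\subseteq I$ with $|N(A)|<|A|$. By minimality, a maximum independent set $J$ missing a vertex of $A$ would satisfy $|J\cap(A\cup N(A))|\le|J\cap A|+|N(A)|-|N(J\cap A)|\le|N(A)|<|A|$. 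Then $(J\setminus(A\cup N(A)))\cup A$ would be a larger independent set. So $A$ lies in the core, a contradiction.
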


A $(k,\ell)$-stable graph $G$ is called \emph{tight} if equality holds in Theorem~\ref{thm:DW}, that is,
$$\alpha(G) = \left\lfloor \frac{n-k+1}{2} \right\rfloor + \ell.$$

Dong and Wu \cite{DongWu2022} proved that there exist tight $n$-vertex $(k,\ell)$-stable graphs for every $n$ and $ \ell $ with $k=\ell+1$ or $k=\ell+2$. Recently, Dong and Luo \cite{DongLuo2024} initiated the study of the structure of tight $(k,0)$-stable graphs. They completely characterized tight $(k,0)$-stable graphs for $k=1$ and $k=2$, showing that such graphs can be arbitrarily large. More precisely, they proved the following structural theorem.

\begin{thm}[Dong and Luo \cite{DongLuo2024}]\label{thm:DL12}
	Let $G$ be a tight $(k,0)$-stable graph on $n$ vertices.
	\begin{itemize}
		\item[(a)] If $k=1$ and $n$ is even, then $G$ contains a perfect matching.
		\item[(b)] If $k=1$ and $n$ is odd, then $G$ has a spanning subgraph that is a vertex-disjoint union of an odd cycle and a (possibly empty) matching.
		\item[(c)] If $k=2$ and $n$ is odd, then $G$ is an odd cycle.
		\item[(d)] If $k=2$ and $n$ is even, then $G$ has a spanning subgraph that is either a vertex-disjoint union of two odd cycles, or an even subdivision of $K_4$.
	\end{itemize}
\end{thm}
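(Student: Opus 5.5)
We prove the four parts in the order (a), (b), (c), (d). The parts for $k=2$ are reduced to the parts for $k=1$ by deleting a vertex. The basic tool is an exchange lemma for two maximum independent sets. Let $I,J$ be maximum independent sets of $G$ and put $H=G[I\triangle J]$. No vertex of $I\cap J$ has a neighbour in $I\cup J$. Hence $(I\cap J)\cup I'$ is independent for every independent set $I'$ of $H$, so $\alpha(H)=|I\setminus J|=|J\setminus I|$. Since $H$ is bipartite, K\"onig's theorem then shows that $H$ has a perfect matching. For $k=1$, the $(1,0)$-stability says exactly that every vertex $v$ is avoided by some maximum independent set $J_v$. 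So every $v$ lies in an edge of $G[I\triangle J_v]$ for any maximum independent set $I$ containing $v$.

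\emph{Part (a).} Here $n$ is even and $\alpha=n/2$. Fix a maximum independent set $I$, so $|I|=|V\setminus I|=n/2$. It suffices to show that the bipartite graph $B$ of $G$-edges between $I$ and $V\setminus I$ satisfies Hall's condition from $I$. Suppose instead that $X\subseteq I$ is a set with $|N_B(X)|<|X|$ chosen with $|X|-|N_B(X)|$ maximal. Then every $v\in X$ is matched in $G[I\triangle J_v]$ into $N_B(X)$. I would combine the sets $J_v$ to obtain an independent set larger than $n/2$, or else a vertex $v\in X$ contained in every maximum independent set. For the combination step I would try a Gallai--Edmonds style argument: $X\setminus N(\cdot)$ acts like a ``$D$'' part and $N_B(X)$ like an ``$A$'' part, and $I\setminus X$ together with $V\setminus(I\cup N_B(X))$ are regrouped. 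Making this step rigorous is the main obstacle of the whole proof.

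\emph{Part (b).} Here $n$ is odd and $\alpha=(n-1)/2$. I would apply Gallai--Edmonds to $G$. Since $G$ has odd order, it has no perfect matching. Stability and tightness force the deficiency to be exactly $1$, and they force the odd components of $G-A$ to produce a single factor-critical block. A factor-critical graph contains a spanning structure made of an odd cycle with nested odd ears. The ears, together with a near-perfect matching, yield an odd cycle plus a matching covering all vertices, which is the required spanning subgraph.

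\emph{Parts (c) and (d).} For $k=2$, $(2,0)$-stability implies that $G-v$ is $(1,0)$-stable with $\alpha(G-v)=\alpha(G)$ for every $v$. If $n$ is odd, then $G-v$ has $n-1$ vertices (an even number) and independence number $(n-1)/2$, so it is tight. By (a), $G-v$ has a perfect matching for every $v$, i.e.\ $G$ is factor-critical. Take a Lovász odd ear decomposition of $G$. If $G$ is more than one odd cycle, consider the endpoints $x,y$ of an ear, or the endpoints of a chord. Deleting $\{x,y\}$ then leaves a graph whose independence number drops below $(n-1)/2$ (the remaining structure consists of paths and cycles that are too short), which contradicts $(2,0)$-stability. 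Hence $G$ is an odd cycle.

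If $n$ is even, then (b) applied to each $G-v$ gives a spanning odd cycle plus matching in $G-v$. Choosing $v$ suitably and adding back its edges gives a spanning structure of $G$ with two odd cycles joined by a matching, or a theta-like configuration. A case analysis, again using that removing any two vertices cannot lower $\alpha$, reduces this either to two disjoint odd cycles or to an even subdivision of $K_4$. I expect this final case analysis to be lengthy but routine once (a) is established.
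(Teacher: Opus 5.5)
The paper only quotes this theorem from Dong and Luo and gives no proof, so your proposal must stand on its own. It does not yet do so. The step you call the main obstacle in (a) is left open, and your (c) and (d) are built on (a) and (b).

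\textbf{Parts (a) and (b).} Your Hall set-up already contains what is needed; neither the sets $J_v$ nor Gallai--Edmonds are required. Take $X$ with deficiency $d=|X|-|N(X)|>0$ maximal and, subject to this, $|X|$ minimal. For any maximum independent set $J$, the set $(J\setminus N(X))\cup X$ is independent, so $|X\setminus J|\le |J\cap N(X)|$. Since $N(X\cap J)\subseteq N(X)\setminus J$, the set $X\cap J$ has deficiency at least $d$, so $X\subseteq J$ by minimality. Thus the nonempty set $X$ lies in every maximum independent set, contradicting $(1,0)$-stability.

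Applied to all independent sets rather than subsets of $I$, the same argument gives $|N(S)|\ge |S|$ for every independent $S$. Tutte's perfect $2$-matching theorem then gives a spanning subgraph whose components are edges and $t$ odd cycles, so $\alpha(G)\le (n-t)/2$. Tightness and parity force $t=0$ or $t=1$, which is (a) and (b) at once.

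Your sketch of (b) instead asserts ``deficiency exactly $1$'' and ``a single factor-critical block'' without argument. It must also exclude a one-vertex unmatched block. For example, $K_4\cup K_1$ attains $\alpha=\lfloor n/2\rfloor$ and has Gallai--Edmonds deficiency $1$ with a trivial block; only stability rules it out.

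\textbf{Part (c).} The reduction to factor-criticality is correct, but the pair you delete is the wrong one.
On the cycle $v_1\dots v_5$ with chord $v_1v_3$, deleting the chord ends leaves $\{v_2\}$ and the edge $v_4v_5$, whose independence number is $2=(n-1)/2$.
For the triangle $xyz$ with odd ear $xpqy$ (also factor-critical with $\alpha=2$), deleting $x,y$ likewise leaves independence number $2$.
Both graphs fail to be $(2,0)$-stable, but only via $\{v_4,v_5\}$ and $\{p,q\}$ respectively.

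What works is to delete two consecutive internal vertices at one end of the last ear that has internal vertices. The rest is spanned by the initial odd cycle plus a matching, so its independence number is at most $(n-3)/2$. Hence the initial cycle is Hamiltonian. A chord $xy$ is then excluded by deleting the two vertices following $x$ on the even cycle closed by $xy$.

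\textbf{Part (d).} This part contains no argument at all: neither the choice of $v$ nor the case analysis is given. The claimed conclusion (two odd cycles with no further matching edges, or an even subdivision of $K_4$) is strictly stronger than what the $2$-matching count gives (a perfect matching, or two odd cycles plus a matching). So the actual content of (d) is exactly what is missing.
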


In stark contrast to the cases $k=1$ and $k=2$, Dong and Luo \cite{DongLuo2024} also discovered that tight $(k,0)$-stable graphs become very rare when $k \geq 3$. 

\begin{thm}[Dong and Luo \cite{DongLuo2024}]\label{thm:k3}
	Let $G$ be a tight $(3,0)$-stable graph. Then $G$ has a spanning subgraph isomorphic to one of $K_4$, $K_5$, $H_7$, $H_9$, or $T_9$, where $H_7$, $H_9$, and $T_9$ are shown in Figure~\ref{t6}.
\end{thm}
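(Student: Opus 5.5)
Since Theorem~\ref{thm:k3} is a result of Dong and Luo quoted from \cite{DongLuo2024}, we outline how we would reprove it with the tools we develop for general $k$.

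Let $G$ be tight $(3,0)$-stable on $n$ vertices and write $\alpha=\alpha(G)=\lfloor (n-2)/2\rfloor$. We may delete edges as long as both stability and $\alpha$ are preserved. Adding edges never increases $\alpha(G-S)$ and can only decrease $\alpha(G)$. So we pass to an edge-minimal spanning subgraph $G'$ that is still tight $(3,0)$-stable; equivalently, deleting any edge of $G'$ either raises $\alpha$ or creates a $3$-set $S$ meeting every maximum independent set. It suffices to show that $G'$ is one of $K_4$, $K_5$, $H_7$, $H_9$, $T_9$.

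\textbf{Step 1: local structure.} Being $(3,0)$-stable means that for every $3$-set $S$ some maximum independent set avoids $S$. Equivalently, the family $\mathcal{I}$ of maximum independent sets has transversal number at least $4$. For each vertex $v$, $G-N[v]$ is $(2,0)$-stable with independence number $\alpha-1$. Taking $S\subseteq N(v)$ shows $\deg(v)\ge 3$ unless special structure forces otherwise, and the same argument applied to $G-v$ and $G-\{u,v\}$ shows these are $(2,0)$- and $(1,0)$-stable. Counting vertices, $G-v$ is then tight $(2,0)$-stable when $n$ is odd, and $G-\{u,v\}$ is tight $(1,0)$-stable in the appropriate parity. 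Theorem~\ref{thm:DL12}(c),(d) then tells us that $G-v$ contains a spanning odd cycle, a union of two odd cycles, or an even subdivision of $K_4$. Similarly, (a),(b) give a perfect matching, or an odd cycle plus a matching, in $G-\{u,v\}$.

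\textbf{Step 2: bounding $n$.} Every vertex deletion leaves this rigid spanning structure, which has maximum degree $\le 3$ and roughly $n$ edges. Combining this over all $v$, we want to show $n\le 9$. If $n$ is odd, $G-v$ contains a spanning odd cycle $C$ for every $v$, so $G$ is "hypo-Hamiltonian-like". We then add back $v$ and test the $3$-sets $S=\{v\}\cup\{$two vertices of $C\}$. Since $\alpha(C-x-y)$ drops to $\alpha-1$ when $x,y$ are chosen so that the two resulting paths have odd length, independent sets of $G-S$ must use $v$'s non-neighbors in a constrained way. This forces $v$ to have neighbors at specific positions along $C$, and a parity count of positions bounds $|C|$. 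The even case is handled similarly using the subdivided-$K_4$ or two-odd-cycle structure. Making this bound $n\le 9$ rigorous is the main obstacle. I would attack it by fixing $S$ to consist of three vertices pairwise at distance $2$ on a long cycle and showing that any independent set of size $\alpha$ avoiding $S$ must alternate, which leads to a contradiction once $n\ge 10$.

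\textbf{Step 3: finite check.} For $n\in\{4,\dots,9\}$ we have $\alpha=\lfloor(n-2)/2\rfloor$. For $n=4,5$ this gives $\alpha=1$, forcing $K_4,K_5$. For $n=6,7$ we have $\alpha=2$; here $n=6$ is impossible by a transversal argument, and $n=7$ yields $H_7$. For $n=8,9$ we have $\alpha=3$, and combining the Step 1 structure (edge-minimal, degrees at least $3$, every vertex-deleted subgraph of the prescribed form) gives a short case analysis ending in $H_9$ or $T_9$, with $n=8$ excluded.
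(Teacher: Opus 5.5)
The paper does not prove this statement. It quotes it from Dong and Luo and uses it as a black box; it feeds Corollary~\ref{thm:DLbound} and Lemmas~\ref{lem:no-ten} and \ref{lem:eig}. Judged on its own, your proposal has a genuine gap. The bound $n\le 9$ in Step~2 is essentially the whole theorem, and you do not prove it. The mechanism you sketch for it also rests on a parity error.

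By Proposition~\ref{prop1}, $G-v$ is tight $(2,0)$-stable on $n-1$ vertices for every $n$, not only for odd $n$. Theorem~\ref{thm:DL12}(c) applies when $n-1$ is odd, i.e.\ when $n$ is \emph{even}, and then $G-v$ \emph{is} an odd cycle. When $n$ is odd, $G-v$ has even order and cannot contain a spanning odd cycle at all. In that case (d) gives only a spanning union of two odd cycles or an even subdivision of $K_4$.

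With the parity corrected, the even case is immediate. First, $(3,0)$-stability forces $\delta(G)\ge 3$. If $d(v)\le 2$, choose a $3$-set $S\supseteq N(v)\cup\{v\}$. Then $\alpha(G-S)\ge\alpha(G)$, while $v$ is isolated in $G-(S\setminus\{v\})$. So that induced subgraph has independence number at least $\alpha(G-S)+1>\alpha(G)$, which is absurd. Second, if every $G-v$ is exactly a cycle, then each $u\neq v$ has $d_G(u)\le 3$, with equality only if $uv\in E(G)$. As $v$ is arbitrary, $G$ is complete, and $K_{n-1}$ is a cycle only when $n=4$. So all the difficulty lies in the odd case, which is precisely the one you dismiss as ``handled similarly''.

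The tools you propose for Step~2 also fail as stated. You claim that $\alpha(C-x-y)$ drops for suitable $x,y$ on an odd cycle $C$; this is false. Removing two vertices from an odd cycle leaves two paths with an odd total number of vertices, whose independence number is still $(|C|-1)/2$. That is exactly why odd cycles are tight $(2,0)$-stable.

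Similarly, take three vertices spaced two apart along an odd cycle $C$, i.e.\ $x, x+2, x+4$. Deleting them leaves two isolated vertices and a path on an even number of vertices. The independence number is again $(|C|-1)/2$. So the spanning structure alone yields no contradiction for any $n$. A real argument must use the edges of $G$ outside the spanning subgraph, the vertex $v$, and how the structures obtained for different deleted vertices fit together. That analysis is absent.

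Two further points:
\begin{itemize}
\item Your Step~1 claim that $G-N[v]$ is $(2,0)$-stable is false. In $H_7$, deleting the closed neighbourhood of its degree-$4$ vertex leaves a two-vertex graph, and no two-vertex graph is $(2,0)$-stable. The claim is unused later, but it should go.
\item The identification of $H_7$, $H_9$ and $T_9$ in Step~3 is asserted, not carried out.
\end{itemize}
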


\begin{figure}[htbp]
	\centering
	\includegraphics[width=13cm]{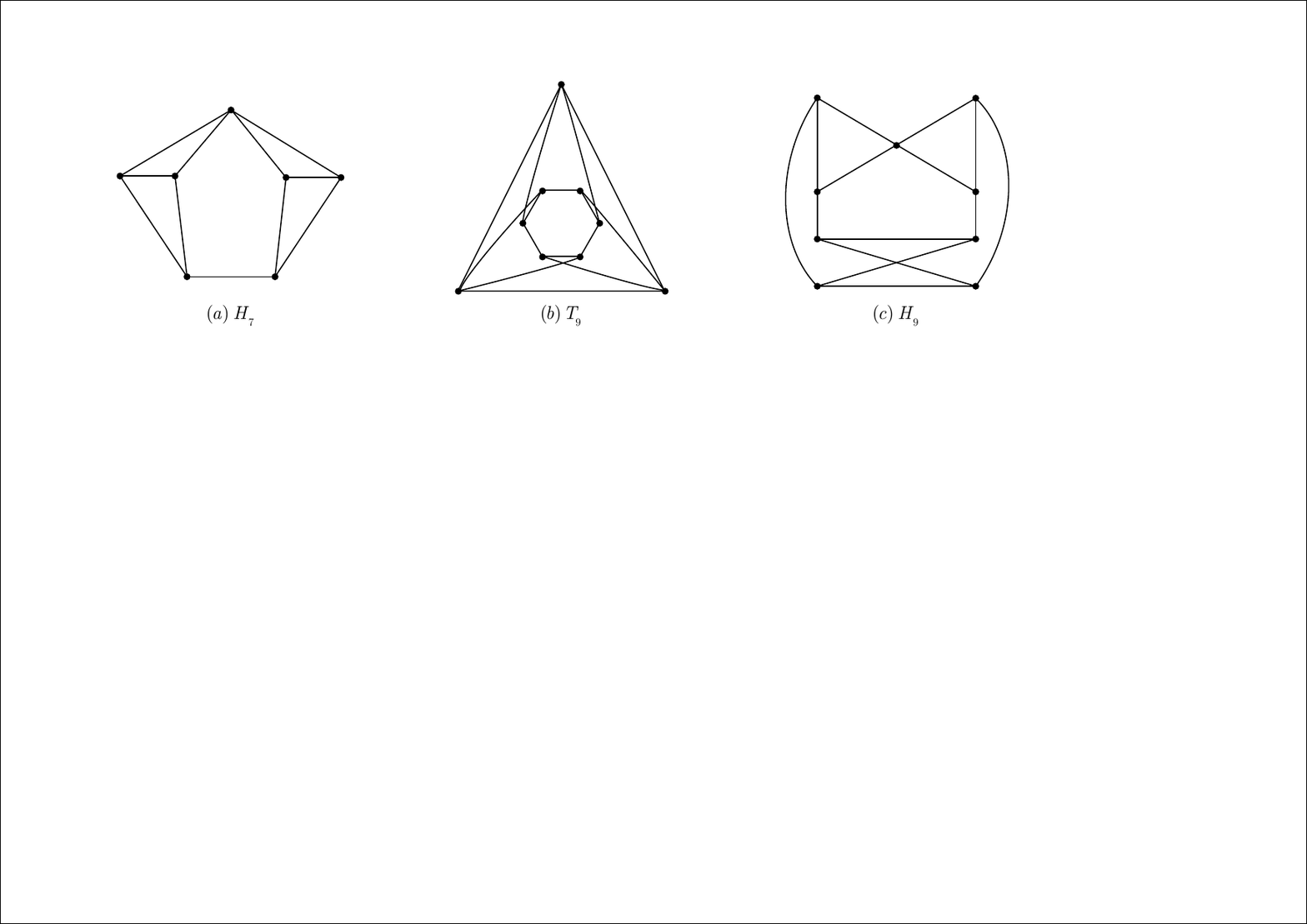}
	\caption{The graphs $H_7$, $H_9$, and $T_9$.}
	\label{t6}
\end{figure}

We collect some basic properties of tight $(k,0)$-stable graphs that follow directly from the definition.

\begin{Proposition}\label{prop1}
	If $G$ is tight $(k,0)$-stable, then for any vertex $v \in V(G)$, the graph $G-v$ is tight $(k-1,0)$-stable.
\end{Proposition}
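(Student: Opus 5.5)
We need to show two things: that $G-v$ is $(k-1,0)$-stable, and that it meets the bound of Theorem~\ref{thm:DW} with equality. Set $n=|V(G)|$ and $G'=G-v$. We assume $k\ge 2$, so that $(k-1,0)$-stability is defined.

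\textbf{Step 1: deleting $v$ does not lower the independence number.} Choose any $(k-1)$-subset $T\subseteq V(G)\setminus\{v\}$ and put $S=T\cup\{v\}$, which is a $k$-subset of $V(G)$. Since $G$ is $(k,0)$-stable,
\[
\alpha(G)\ \ge\ \alpha(G')\ \ge\ \alpha(G'-T)\ =\ \alpha(G-S)\ \ge\ \alpha(G).
\]
Hence $\alpha(G')=\alpha(G)$. This needs $n-1\ge k-1$, which holds because the definition of $(k,0)$-stability presupposes $n\ge k$.

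\textbf{Step 2: $G'$ is $(k-1,0)$-stable.} For every $(k-1)$-subset $T$ of $V(G')$, the chain above gives $\alpha(G'-T)=\alpha(G-S)\ge\alpha(G)=\alpha(G')$. This is exactly the $(k-1,0)$-stability of $G'$.

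\textbf{Step 3: tightness.} The graph $G'$ has $n-1$ vertices, and its bound from Theorem~\ref{thm:DW} is
\[
\left\lfloor \frac{(n-1)-(k-1)+1}{2}\right\rfloor=\left\lfloor \frac{n-k+1}{2}\right\rfloor .
\]
By tightness of $G$ and Step 1, $\alpha(G')=\alpha(G)=\lfloor (n-k+1)/2\rfloor$. So $G'$ is tight $(k-1,0)$-stable.

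There is no real obstacle here. The only point needing care is the observation that the quantity $n-k$ is unchanged when $n$ and $k$ both drop by one, so the bound is the same. We should also note the degenerate requirement $k\ge 2$, which keeps $k-1>0=\ell$ in the definition.
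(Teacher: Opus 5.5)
Your proof is correct, and it is the direct verification from the definitions that the paper has in mind; the paper states this proposition without proof, remarking only that it follows directly from the definition. The chain $\alpha(G)\ge\alpha(G-v)\ge\alpha(G-S)\ge\alpha(G)$, for any $k$-set $S$ containing $v$, gives both $\alpha(G-v)=\alpha(G)$ and the $(k-1,0)$-stability, and the fact that $n-k$ is unchanged when $n$ and $k$ both drop by one gives tightness, so nothing is missing.
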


Hence, by Proposition \ref{prop1} and Theorem \ref{thm:k3}, the order of tight $(k,0)$-stable graphs is obtained.

\begin{cor}[Dong and Luo \cite{DongLuo2024}]\label{thm:DLbound}
	For all $k \geq 3$, every tight $(k,0)$-stable graph has $k+c$ vertices, where $ c\in\{1, 2, 4, 6\} $.
\end{cor}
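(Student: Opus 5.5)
The plan is to induct downward on $k$ by repeatedly applying Proposition~\ref{prop1}, reducing to the case $k=3$ where Theorem~\ref{thm:k3} gives the complete list of possible orders.

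Let $G$ be a tight $(k,0)$-stable graph on $n$ vertices with $k\ge 3$. We may assume $k-3\le n-1$; otherwise $G$ has fewer than $k-2$ vertices. In that degenerate range the deletion condition concerns sets $S$ with $|S|=k\ge n$, and these need separate handling. Choose any vertices $v_1,\dots,v_{k-3}$ of $G$ and set $G'=G-\{v_1,\dots,v_{k-3}\}$. Applying Proposition~\ref{prop1} $k-3$ times, each time deleting one vertex and lowering the stability parameter by one, shows that $G'$ is a tight $(3,0)$-stable graph on $n-(k-3)$ vertices.

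By Theorem~\ref{thm:k3}, $G'$ has a spanning subgraph isomorphic to one of $K_4$, $K_5$, $H_7$, $H_9$, $T_9$. A spanning subgraph has the same vertex set, so $|V(G')|\in\{4,5,7,9\}$. Hence
\[
n=|V(G')|+k-3\in\{k+1,\,k+2,\,k+4,\,k+6\},
\]
that is, $n=k+c$ with $c\in\{1,2,4,6\}$, as claimed.

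The only real obstacle is making sure each application of Proposition~\ref{prop1} is legitimate. At every intermediate stage the graph must still contain a vertex to delete, and the definitions must make sense: the intermediate graph $G-\{v_1,\dots,v_i\}$ has stability parameter $k-i\ge 4>0$, so $k-i>\ell=0$ still holds. To exclude tiny $n$ at the start, I would use the definition directly. If $n\le k$, then deleting $k$ vertices is either impossible or leaves the empty graph. In the second case stability forces $\alpha(G)=0$, whereas tightness gives $\alpha(G)=\lfloor (n-k+1)/2\rfloor$, which is positive or inconsistent depending on the convention adopted in \cite{DongLuo2024}. I expect the paper's convention, that a $(k,0)$-stable graph has at least $k+1$ vertices, resolves this. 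Beyond that, the corollary is a direct count.
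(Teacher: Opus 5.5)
Your proof is correct and is exactly the paper's argument: the paper obtains this corollary in one line by deleting $k-3$ vertices through repeated use of Proposition~\ref{prop1} and then reading the possible orders $4,5,7,9$ off Theorem~\ref{thm:k3}. Your worry about tiny $n$ needs no convention: for $1\le n\le k$ one has $\alpha(G)\ge 1>0\ge\lfloor (n-k+1)/2\rfloor$, so such a $G$ cannot be tight (contrary to what you wrote, the tightness value is never positive in that range).
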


The complete graph $K_{k+1}$ is always tight $(t,0)$-stable for any $1\le t\le k $. Dong and Luo asked whether there exists any other natural infinite family of tight $(k,0)$-stable graphs for $k \geq 3$. They specifically posed the following question.
 
\begin{con}[Dong and Luo \cite{DongLuo2024}]
	There exists a positive integer $k_0$ such that for all $k \geq k_0$, the only tight $(k,0)$-stable graphs are $K_{k+1}$ and $K_{k+2}$.
\end{con}

This question was subsequently answered affirmatively by Liu, Song, and Wang \cite{LiuSongWang2025}.

\begin{thm}[Liu, Song, and Wang \cite{LiuSongWang2025}]\label{thm:LSW}
	For all $k \geq 24$, the only tight $(k,0)$-stable graphs are $K_{k+1}$ and $K_{k+2}$.
\end{thm}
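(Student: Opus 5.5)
The plan is to use Corollary~\ref{thm:DLbound} to reduce to four possible orders, and then to rule out the two non-complete cases by a Ramsey-type argument. By Corollary~\ref{thm:DLbound}, a tight $(k,0)$-stable graph $G$ with $k\ge 3$ has $n=k+c$ vertices, where $c\in\{1,2,4,6\}$. Tightness gives $\alpha(G)=\lfloor (c+1)/2\rfloor$, which equals $1,1,2,3$ respectively. Being $(k,0)$-stable says exactly that every set of $n-k=c$ vertices of $G$ contains an independent set of size $\alpha(G)$. So the whole proof is a case analysis on $c$.

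For $c\in\{1,2\}$ we have $\alpha(G)=1$, so $G$ is complete, namely $K_{k+1}$ or $K_{k+2}$. Conversely, both graphs are $(k,0)$-stable, since removing $k$ vertices leaves a nonempty graph with independence number $1$. They are also tight, because $\lfloor 2/2\rfloor=\lfloor 3/2\rfloor=1$. So these two graphs do occur.

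For $c=4$ we have $\alpha(G)=2$, and every $4$-set of vertices contains a non-edge, so $G$ is $K_4$-free. Since $G$ also has no independent set of size $3$, we get $n\le R(4,3)-1=8$. Hence $k=n-4\le 4$, which contradicts $k\ge 24$.

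For $c=6$ we have $\alpha(G)=3$, and every $6$-set contains an independent $3$-set. The first step is to show that $G$ is $K_4$-free. Suppose $A$ induces a $K_4$. For any two vertices $x,y\notin A$, the $6$-set $A\cup\{x,y\}$ must contain an independent $3$-set, and such a set uses at most one vertex of $A$. It must therefore contain both $x$ and $y$, so $xy\notin E(G)$. Thus $V(G)\setminus A$ is independent, and it has $n-4=k+2\ge 5>3=\alpha(G)$ vertices, a contradiction. So $G$ is $K_4$-free with no independent set of size $4$, which gives $n\le R(4,4)-1=17$ and $k\le 11$. This again contradicts $k\ge 24$.

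The only step needing real care is this $c=6$ case, specifically the observation that a $K_4$ would force the remaining vertices to be pairwise nonadjacent. Everything else is a direct unwinding of the definitions plus the classical values $R(3,4)=9$ and $R(4,4)=18$. The argument in fact proves the conclusion for all $k\ge 12$, which is consistent with the paper's aim of lowering the threshold below $24$.
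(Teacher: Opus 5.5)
Your proof is correct. For $c\in\{1,2,4\}$ it is the same as the paper's argument; the paper proves the stronger Theorem~\ref{thm:main}, which subsumes this statement, and part (i) of that proof covers your cases.

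The two routes differ only in the case $c=6$:
\begin{itemize}
\item You establish $K_4$-freeness exactly as in Lemma~\ref{lem:c6}. You then use the classical diagonal value $R(4,4)=18$, which gives $n\le 17$ and hence $k\le 11$.
\item The paper adds one more observation: $G$ is $2K_3$-free, since two disjoint triangles span six vertices with independence number $2$. It then applies the Lortz--Mengersen value $R(2K_3,K_4)=11$. This bounds a $2K_3$-free graph with $\alpha(G)\le 3$ by $10$ vertices, so $k\le 4$.
\end{itemize}

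Your route is shorter and uses only textbook Ramsey numbers. It easily covers $k\ge 24$, and in fact gives $k\ge 12$. The paper's route depends on a less standard Ramsey result, but it lowers the threshold to $k\ge 5$. The paper then needs the separate case analysis of Lemma~\ref{lem:no-ten} only to rule out the leftover order-$10$ case when $k=4$.
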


Despite these advances, the precise structure of tight $(k,0)$-stable graphs for  $4\leq k \leq 23$ remains to be fully determined. In this paper, we provide a complete characterization of tight $(k,0)$-stable graphs. Our main result is as follows.

\begin{thm}\label{thm:main} Let $k\geq 4$, and let $G$ be a graph on $n$ vertices. Then $G$ is a tight $(k,0)$-stable graph if and only if one of the following holds: \begin{enumerate} 
		\item[(i)] $G\cong K_{k+1}$ or $G\cong K_{k+2}$ when $k\geq 5$;
		 \item[(ii)] $G$ is isomorphic to one of $K_5, K_6, H_8^1, H_8^2, H_8^3$ when $k=4$, where $H_8^1, H_8^2, H_8^3$ are shown in Figure \ref{t0}.
\end{enumerate} 
\end{thm}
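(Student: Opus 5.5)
The plan is an induction on $k$ driven by Proposition~\ref{prop1} and Corollary~\ref{thm:DLbound}. If $G$ is tight $(k,0)$-stable with $k\ge 4$, then $n=k+c$ with $c\in\{1,2,4,6\}$. For $c=1$ and $c=2$ tightness gives $\alpha(G)=\lfloor (n-k+1)/2\rfloor=1$, so $G$ is complete, i.e.\ $G\cong K_{k+1}$ or $K_{k+2}$. Conversely, these complete graphs are clearly tight. The real content is therefore: for $k\ge 5$ no graph with $c\in\{4,6\}$ is tight, while for $k=4$ the graphs with $c=4$ are exactly $H_8^1,H_8^2,H_8^3$ and none has $c=6$.

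\textbf{Step 1 ($k=4$, $n\in\{8,10\}$).} By Proposition~\ref{prop1}, every vertex-deleted subgraph $G-v$ is tight $(3,0)$-stable on $n-1\in\{7,9\}$ vertices. By Theorem~\ref{thm:k3}, $G-v$ then contains a spanning copy of $H_7$, or of $H_9$ or $T_9$, and $\alpha(G-v)=\alpha(G)=2$ or $3$. The approach is to reconstruct $G$ from these data. Fix $v$, put a spanning $H_7$ (respectively $H_9$ or $T_9$) on $G-v$, and use two constraints:
\begin{itemize}
\item $\alpha(G)$ equals the tight value, which forces $v$ to have few non-neighbours among the maximum independent sets of $G-v$;
\item every $G-u$ must again contain one of the listed spanning subgraphs.
\end{itemize}
Together these severely restrict $N(v)$ and the possible extra edges. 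For $n=10$, I expect to show that deleting a suitable vertex $u$ leaves an $H_9/T_9$ structure incompatible with $\alpha(G)=3$, or that $(4,0)$-stability fails for some $4$-set $S$ that hits every maximum independent set. For $n=8$, the case analysis should yield exactly $H_8^1,H_8^2,H_8^3$, and I would verify directly that these three graphs are tight $(4,0)$-stable.

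\textbf{Step 2 (base case $k=5$).} If $G$ is tight $(5,0)$-stable with $n=k+4=9$ (or $k+6=11$), then each $G-v$ is tight $(4,0)$-stable on $8$ (or $10$) vertices. By Step~1, $n=11$ is impossible, and when $n=9$ every $G-v$ is one of $H_8^1,H_8^2,H_8^3$ with $\alpha=2$. I would then show that no $9$-vertex graph with $\alpha(G)=2$ has all of its vertex-deleted subgraphs in this list. Edge counting is a good first tool: $|E(G)|=\frac{1}{n-2}\sum_v |E(G-v)|$ gives integrality and degree constraints. This should be combined with exhibiting a $5$-set $S$ that destroys all independent $2$-sets, for instance when some vertex has small non-neighbourhood.

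\textbf{Step 3 (induction for $k\ge 6$).} Suppose $G$ is tight $(k,0)$-stable with $n\in\{k+4,k+6\}$ and $k\ge 6$. Then each $G-v$ is tight $(k-1,0)$-stable on $k-1+c$ vertices with $c\in\{4,6\}$, which by the inductive hypothesis (ultimately Step~2) is impossible. This step is immediate once Step~2 holds.

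\textbf{Main obstacle.} The hard part is the finite reconstruction in Steps~1 and~2. There the spanning-subgraph conclusions of Theorem~\ref{thm:k3} allow extra edges, so one must control both the extra edges and the attachment of the deleted vertex. I would first exploit the exact value of $\alpha$: every $k$-set $S$ must miss some maximum independent set. This gives a strong covering obstruction that kills most configurations. A careful (possibly computer-aided) enumeration would handle the remainder.
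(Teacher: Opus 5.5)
Your reduction via Proposition~\ref{prop1} is sound, and for $c=6$ it is more economical than the paper. Once you know there is no tight $(4,0)$-stable graph on $10$ vertices, deleting $k-4$ vertices rules out $c=6$ for every $k\ge 4$, so the paper's appeal to $R(2K_3,K_4)=11$ is unnecessary. For $c=4$, however, the detour through $k=5$, $n=9$ and the $n=8$ classification is not needed. By Proposition~\ref{prop2}, every tight graph with $c=4$ is $K_4$-free with $\alpha=2$, so $n\ge 9$ contradicts $R(K_4,K_3)=9$ directly, which is what the paper does. Your edge-counting plan for $n=9$, carried out, just reproves this:
a vertex of degree at most $4$ has four pairwise adjacent non-neighbours; one of degree at least $6$ has a triangle in its neighbourhood by $R(3,3)=6$ (either way a $K_4$); and nine vertices of degree $5$ violate parity.

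The genuine gap is Step~1, which in your route carries all the content. The nonexistence at $n=10$ is the entire $c=6$ case, and the classification at $n=8$ is the entire part (ii). You only say what you expect the case analysis to give and defer to a possibly computer-aided enumeration. The constraints you name (re-checking each $G-u$ against the list, and $v$ having a neighbour in each maximum independent set of $G-v$) neither bound $N(v)$ from above nor control the extra edges that Theorem~\ref{thm:k3} allows.

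What makes the analysis short is converting stability, via Proposition~\ref{prop2}, into forbidden subgraphs:
\begin{itemize}
\item for $c=4$, $G$ is precisely a $K_4$-free graph with $\alpha(G)=2$;
\item for $c=6$, $G$ is $2K_3$-free (two disjoint triangles span a $6$-set with $\alpha\le 2$) and $K_4$-free (a $K_4$ would force the remaining $k+2\ge 6$ vertices to be independent).
\end{itemize}
Then take $v$ of maximum degree, so $d(v)$ is at least the maximum degree of the spanning $H_7$, $H_9$ or $T_9$. Meanwhile $K_4$-freeness (at most two neighbours in each triangle) and $2K_3$-freeness (no two consecutive neighbours on a cycle disjoint from a triangle) cap $N(v)$ from above. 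This leaves a handful of neighbourhoods. At $n=10$ each dies by a $6$-set with independence number $2$ or by a vertex whose degree would exceed $\Delta(G)$. At $n=8$ it leaves exactly $H_8^1,H_8^2,H_8^3$, equivalently the complements of the three $(3,4)$-Ramsey graphs on $8$ vertices. Without these ingredients, Step~1 is a plan rather than a proof.
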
 

\begin{figure}[!htp]
	\centering
	\includegraphics[width=10cm]{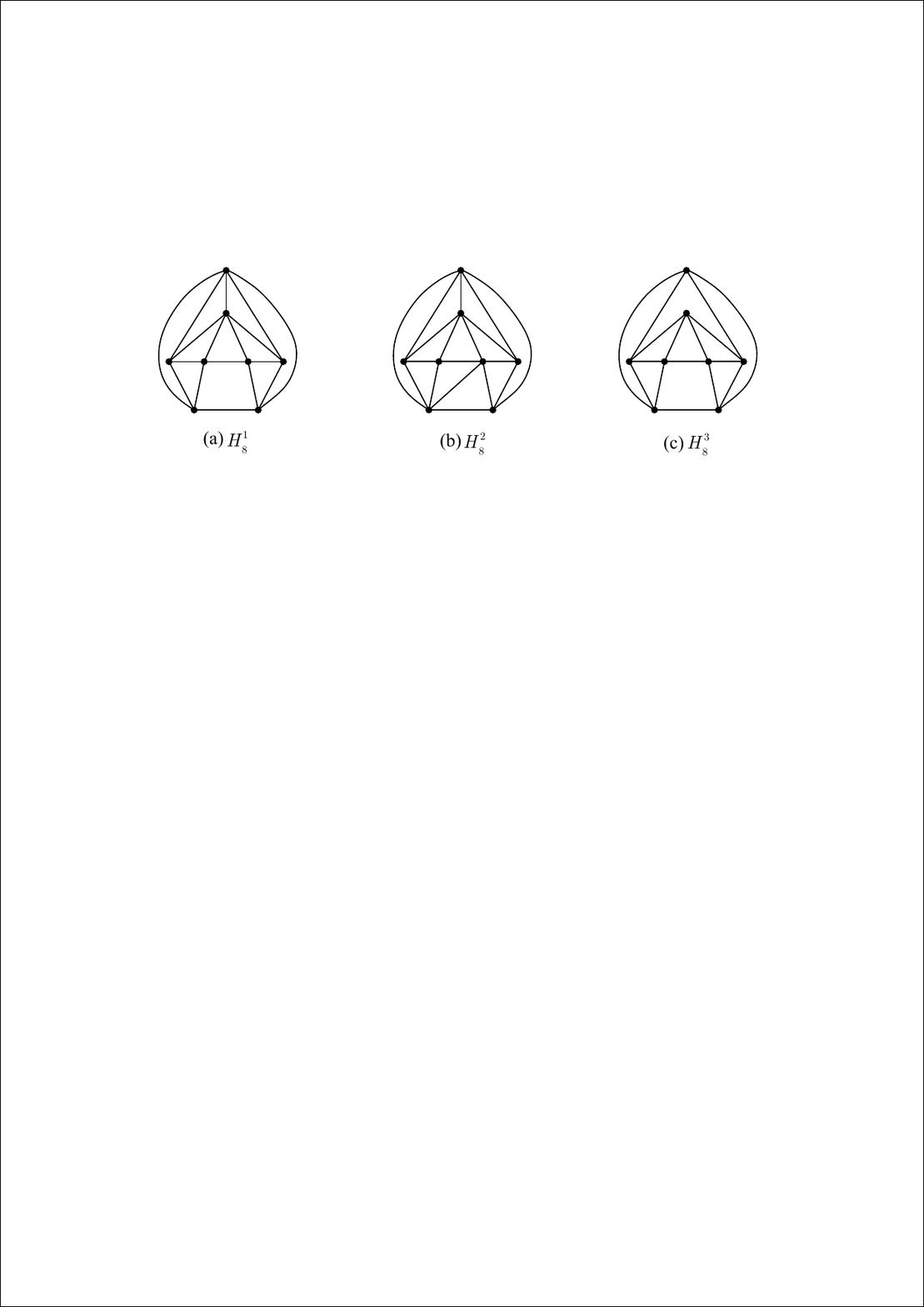}
	\caption{Graphs $ H_8^1 $, $ H_8^2 $ and $ H_8^3 $.}
	\label{t0}
\end{figure}

\section{ Proof of Theorem \ref{thm:main}}

\vskip 2mm
\noindent

In this section, we first establish several important results to prove Theorem \ref{thm:main}.

\begin{Proposition}\label{prop2}
	Let $G$ be a tight $(k,0)$-stable graph with $n = k+c$ vertices. Then for any induced subgraph $H$ of $G$ on $c$ vertices, we have
	$$\alpha(H) = \alpha(G) = \left\lfloor \frac{c+1}{2} \right\rfloor.$$
\end{Proposition}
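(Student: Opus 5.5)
The plan is to compare $\alpha(H)$ with $\alpha(G)$ from both sides, using the stability condition with a suitable $k$-set $S$.

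\emph{Value of $\alpha(G)$.} Since $n = k+c$ and $\ell = 0$, tightness gives
\[
\alpha(G) = \left\lfloor \frac{n-k+1}{2} \right\rfloor = \left\lfloor \frac{c+1}{2} \right\rfloor,
\]
which is the second equality in the statement.

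\emph{Choice of $S$.} Fix an induced subgraph $H$ of $G$ on $c$ vertices. Put $S = V(G)\setminus V(H)$, so that $|S| = n-c = k$ and $G - S = H$.

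\emph{Lower bound.} Because $G$ is $(k,0)$-stable, $\alpha(H) = \alpha(G-S) \ge \alpha(G)$.

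\emph{Upper bound.} Any independent set of $H$ is also independent in $G$, since $H$ is an induced subgraph. Hence $\alpha(H) \le \alpha(G)$.

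Combining the two bounds gives $\alpha(H) = \alpha(G) = \lfloor (c+1)/2 \rfloor$.

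There is no real obstacle here. The only point that needs care is the bookkeeping: we need $c \ge 0$, which holds because Corollary~\ref{thm:DLbound} (or simply $n > k$) guarantees it, so that $S$ has exactly $k$ elements.
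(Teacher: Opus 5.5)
Your proof is correct and matches the paper's argument: you take $S=V(G)\setminus V(H)$ so that $G-S=H$, use $(k,0)$-stability for $\alpha(H)\ge\alpha(G)$ and the induced-subgraph inclusion for $\alpha(H)\le\alpha(G)$, and use tightness with $n-k=c$ to get the value $\lfloor (c+1)/2\rfloor$. Your closing remark about $c\ge 0$ is harmless but unnecessary, since once $H$ is given, $|S|=n-c=k$ holds by construction.
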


\begin{proof}
	Let $H=G-S$, where $|S|=k$. Since $G$ is $(k,0)$-stable,
	\[
	\alpha(H)=\alpha(G-S)\geq \alpha(G).
	\]
	Since $H$ is an induced subgraph of $G$, we also have
	\[
	\alpha(H)\leq \alpha(G).
	\]
	Thus equality holds. Finally, as $n=k+c$ and $G$ is tight,
	\[
	\alpha(G)=\left\lfloor \frac{n-k+1}{2}\right\rfloor
	=\left\lfloor \frac{c+1}{2}\right\rfloor .
	\]
\end{proof}

The \emph{Ramsey number} $R(G,H)$ is the minimum integer $N$ such that every graph on $N$ vertices contains either $G$ or $\overline{H}$. Where $\overline{H}$ is the complement of the graph $ H $. We next state two known results for Ramsey number.

\begin{thm}[Greenwood and Gleason \cite{GreenwoodGleason1955}]\label{thm:RG}
	$R(K_4, K_3) = 9$.
\end{thm}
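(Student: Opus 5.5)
The claim is that every graph on $9$ vertices contains $K_4$ or an independent set of size $3$, and that some graph on $8$ vertices contains neither. I would prove the upper bound $R(K_4,K_3)\le 9$ by a degree-counting argument, and the lower bound $R(K_4,K_3)>8$ by exhibiting an explicit circulant graph on $8$ vertices.

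\emph{Upper bound.} Let $G$ be a graph on $9$ vertices with no independent $3$-set, and suppose for contradiction that $G$ has no $K_4$. The steps are as follows.
\begin{enumerate}
\item Fix a vertex $v$. Suppose $v$ had at least $4$ non-neighbours. Any two non-adjacent non-neighbours would form, together with $v$, an independent $3$-set. So these non-neighbours would be pairwise adjacent, and any $4$ of them would span a $K_4$. Hence every vertex has at most $3$ non-neighbours, and therefore degree at least $5$.
\item Suppose $v$ had degree at least $6$. Since $R(3,3)=6$, the neighbourhood of $v$ contains a triangle or an independent $3$-set. An independent $3$-set is excluded, and a triangle together with $v$ gives a $K_4$. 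Hence every vertex has degree at most $5$.
\item By the two steps above, $G$ is $5$-regular on $9$ vertices. Then the degree sum is $45$, which is odd, contradicting the handshake lemma.
\end{enumerate}
The only external input is $R(3,3)=6$, which I would either quote or reprove in one line by the same pigeonhole argument.

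\emph{Lower bound.} Let $W$ be the circulant graph on $\mathbb{Z}_8$ in which $i\sim j$ iff $i-j\in\{\pm1,4\}$; this is the Wagner graph. I take $\overline{W}$, whose edges are the differences $\{\pm2,\pm3\}$, as the witness, and check two properties.
\begin{enumerate}
\item $W$ is triangle-free, so $\overline{W}$ has no independent $3$-set. A triangle in $W$ needs three differences from $\{1,-1,4\}$ (repetition allowed) summing to $0 \bmod 8$. Running through the combinations, none does: for example, $1+1+4=6$, $1-1+4=4$ and $4+4+4=4$.
\item $W$ has no independent $4$-set, so $\overline{W}$ has no $K_4$. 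Four vertices of the $8$-cycle with no two consecutive must form one of the two parity classes $\{0,2,4,6\}$ or $\{1,3,5,7\}$. Each of these contains an antipodal pair, such as $0,4$, which is joined by a difference-$4$ edge.
\end{enumerate}

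The main obstacle is the lower-bound construction, since one has to guess the correct $8$-vertex graph. The upper bound is routine once one observes that the non-neighbourhood must be a clique and the neighbourhood is controlled by $R(3,3)$. The parity contradiction in step 3 is the one delicate point there, and it is exactly why $9$ rather than $10$ vertices suffice.
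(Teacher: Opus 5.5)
Your proof is correct. It necessarily takes a different route, because the paper gives no proof at all: it quotes $R(K_4,K_3)=9$ from Greenwood and Gleason and uses only the upper bound. That use comes in the case $c=4$, $k\ge 5$ of the proof of Theorem~\ref{thm:main}, to exclude $K_4$-free graphs with independence number $2$ on $n\ge 9$ vertices.

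Your upper-bound half is the classical argument and makes exactly that step self-contained at the cost of a few lines:
\begin{itemize}
\item the non-neighbourhood of a vertex is a clique, so $\delta\ge 5$;
\item a neighbourhood of size $6$ yields a $K_4$ via $R(3,3)=6$, so $\Delta\le 5$;
\item a $5$-regular graph on $9$ vertices violates the handshake lemma.
\end{itemize}

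The paper does not need your lower-bound half, but it is already implicit there. By the definition together with Proposition~\ref{prop2}, a tight $(4,0)$-stable graph on $8$ vertices is precisely an $8$-vertex $K_4$-free graph with independence number $2$. So Lemma~\ref{lem:eig} amounts to classifying the $8$-vertex witnesses for $R(K_4,K_3)>8$. Your witness $\overline{W}$ is $4$-regular, so it falls under Case~2 of that proof and is the paper's $H_8^3$; its complement is indeed the Wagner graph.

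One small tightening: in showing that $W$ is triangle-free, replace the examples by a complete check. Three differences from $\{\pm1,4\}$ have an even sum only if zero or two of them are odd. This gives either $12$, or $4$ plus one of $2,0,-2$, and none of these is $0 \bmod 8$.
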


\begin{thm}[Lortz and Mengersen~\cite{LortzMengersen2011}]\label{thm:Ramsey2K3}
	$R(2K_3, K_4)=11$, where $2K_3$ denotes the disjoint union of two triangles.
\end{thm}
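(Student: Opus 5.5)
We prove the two inequalities $R(2K_3,K_4)\ge 11$ and $R(2K_3,K_4)\le 11$ separately. With the paper's convention, $R(2K_3,K_4)\le N$ means that every graph on $N$ vertices contains either two vertex-disjoint triangles or an independent set of size $4$.

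\emph{Lower bound.} It suffices to exhibit a graph on $10$ vertices with independence number at most $3$ and no two disjoint triangles. Take $G_0=K_5\cup C_5$, the disjoint union.
\begin{itemize}
\item Since $\alpha(K_5)=1$ and $\alpha(C_5)=2$, we get $\alpha(G_0)=3$.
\item $C_5$ is triangle-free, so every triangle of $G_0$ lies inside the $K_5$.
\item Two disjoint triangles would need $6$ vertices, so they cannot both fit in the $K_5$.
\end{itemize}
Hence $R(2K_3,K_4)\ge 11$.

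\emph{Upper bound.} Let $G$ have $11$ vertices with $\alpha(G)\le 3$, and suppose for contradiction that $G$ has no two disjoint triangles. Since $11\ge 9=R(K_4,K_3)$ (Theorem \ref{thm:RG}), $G$ contains a triangle, so the family $\mathcal{T}$ of all triangles of $G$, viewed as $3$-subsets of $V(G)$, is nonempty. By assumption, $\mathcal{T}$ is a pairwise intersecting $3$-uniform family.

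The key step is a small transversal. Suppose some set $X$ with $|X|\le 2$ meets every triangle. Then $G-X$ has at least $9$ vertices, is triangle-free, and satisfies $\alpha(G-X)\le 3$. This contradicts Theorem \ref{thm:RG}.

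So it remains to show that a pairwise intersecting family $\mathcal{T}$ of triples always has a transversal of size at most $2$, unless it is the Fano plane. We argue as follows.
\begin{itemize}
\item Fix a triangle $\{a,b,c\}$. Every other triangle meets it.
\item If no pair from $\{a,b,c\}$ is a transversal, then for each pair, for instance $\{a,b\}$, some triangle meets $\{a,b,c\}$ only in $c$, and similarly for $a$ and for $b$.
\item Chasing the pairwise intersections among these triangles, and then among further triangles through $a$, $b$, $c$, forces the seven-point configuration in which every two triples meet in exactly one point. That is the Fano plane.
\end{itemize}
This is the classical fact that an intersecting $3$-uniform family has covering number at most $2$ unless it is the Fano plane. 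We expect it to be the main technical step, and would either verify it by the case analysis just outlined or cite it.

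In the Fano case, every pair of the $7$ points lies in a common line, so these $7$ vertices span a $K_7$ in $G$. Since $K_7$ contains $2K_3$, this is again a contradiction. Therefore $R(2K_3,K_4)\le 11$, and together with the lower bound, $R(2K_3,K_4)=11$.
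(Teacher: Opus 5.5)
The paper gives no proof of this statement (it is quoted from Lortz and Mengersen), so your argument has to stand on its own. Your lower bound via $K_5\cup C_5$ is correct, and so is your reduction showing that a triangle transversal of size at most $2$ is impossible.

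The gap is the ``classical fact'' you then invoke. It is false that a pairwise intersecting $3$-uniform family has covering number at most $2$ unless it is the Fano plane. The family $\binom{[5]}{3}$ of all triples of a $5$-set is intersecting (as $3+3>5$) and has covering number $3$, since the complement of any $2$-set in $[5]$ contains a triple. The Fano plane minus one line is another counterexample.

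The first example genuinely occurs in your setting. Suppose $G$ contains a $K_5$ and all triangles of $G$ lie inside it. Then the triangles pairwise meet and no two vertices cover them. Since $K_5$ contains no $2K_3$, neither of your contradictions applies. The Fano case you do treat is vacuous, as your own argument shows. Your outlined ``chase'' fails because it treats triangles as an abstract hypergraph and ignores that the edges of triangles create new triangles.

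The repair is a graph version of the lemma, with $K_5$ as the only exception. Take a triangle $abc$ and triangles $T_a,T_b,T_c$ meeting $\{a,b,c\}$ exactly in $a$, $b$, $c$ respectively.

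\emph{No common vertex.} If $T_a\cap T_b\cap T_c=\emptyset$, then $T_a=\{a,p,q\}$, $T_b=\{b,p,r\}$, $T_c=\{c,q,r\}$ with $p,q,r$ distinct. Then $abp$ is a triangle disjoint from $T_c$, a contradiction.

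\emph{A common vertex $x$.} Then $\{a,b,c,x\}$ spans a $K_4$; write $T_a=\{a,x,a'\}$. A triangle avoiding $\{a,x\}$ exists since the covering number is at least $3$. It must contain $a'$ (to meet $T_a$) and one of $b,c$ (to meet $bcx$). If it contains $b$ but not $c$, it is disjoint from $acx$; symmetrically for $c$. So it is $a'bc$, and $\{a,b,c,x,a'\}$ spans a $K_5$. Any triangle with at most two vertices in this $K_5$ misses a triangle of the $K_5$, so all triangles of $G$ lie inside it.

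\emph{Excluding the $K_5$ case.} The other $6$ vertices induce a triangle-free graph. Since $R(K_3,K_3)=6$, they contain an independent set $I$ of size $3$. Each vertex outside the $K_5$ has at most one neighbour in it, because two neighbours would give a triangle disjoint from the remaining three vertices of the $K_5$. Hence at most three vertices of the $K_5$ have a neighbour in $I$, and any other vertex of the $K_5$ together with $I$ is an independent set of size $4$.

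With this argument replacing your Fano step, the proof is complete.
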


\begin{lem}\label{lem:c6}
	Let $G$ be a tight $(k,0)$-stable graph on $n=k+6$ vertices, where $ k\ge4 $. Then $	\alpha(G)=3,$ and $G$ contains neither a copy of $K_4$ nor $2K_3$.
\end{lem}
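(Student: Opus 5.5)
The plan is to compute $\alpha(G)$ directly and then rule out each forbidden subgraph by a single deletion argument using Proposition~\ref{prop2}.

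\textbf{Step 1: the value of $\alpha(G)$.} Since $n=k+6$, tightness gives $\alpha(G)=\lfloor (6+1)/2\rfloor=3$.

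\textbf{Step 2: the key consequence of Proposition~\ref{prop2}.} With $c=6$, every induced subgraph $H$ of $G$ on $6$ vertices satisfies $\alpha(H)=3$. Removing vertices never increases the independence number, so every induced subgraph on at most $6$ vertices has independence number at most $3$. We therefore want a $6$-vertex induced subgraph with $\alpha\le 2$. This requires $n\ge 6$, which holds since $n=k+6\ge 10$.

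\textbf{Step 3: excluding $2K_3$.} Suppose $G$ contains two vertex-disjoint triangles $T_1,T_2$, not necessarily induced. Let $H=G[V(T_1)\cup V(T_2)]$, an induced subgraph on exactly $6$ vertices. Any independent set of $H$ contains at most one vertex from each triangle, so $\alpha(H)\le 2<3$. This contradicts Proposition~\ref{prop2}.

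\textbf{Step 4: excluding $K_4$.} Suppose $G$ contains a copy of $K_4$ on a vertex set $Q$. Choose any two further vertices $x,y\notin Q$, which exist since $n\ge 10$, and let $H=G[Q\cup\{x,y\}]$, an induced subgraph on $6$ vertices. An independent set of $H$ contains at most one vertex of $Q$, together with possibly $x$ and $y$, so $\alpha(H)\le 3$.

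This bound is not yet a contradiction, since Proposition~\ref{prop2} only demands $\alpha(H)=3$. Equality forces $x\not\sim y$, and also forces some vertex of $Q$ to be nonadjacent to both $x$ and $y$. The choice of $x,y$ was arbitrary, so this holds for every pair outside $Q$. In particular, $V(G)\setminus Q$ (at least $6$ vertices) must be an independent set. That gives $\alpha(G)\ge 6>3$, a contradiction.

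The main subtlety is this last step: using the freedom in choosing the two extra vertices, rather than trying to get a contradiction from a single $6$-vertex subgraph. The Ramsey results (Theorems~\ref{thm:RG} and~\ref{thm:Ramsey2K3}) are not needed here. Presumably they are used afterwards with this lemma, via $R(K_4,K_3)=9$ and $R(2K_3,K_4)=11$, to bound $n$.
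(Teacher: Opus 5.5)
Your proposal is correct and follows the paper's proof essentially step for step. You get $\alpha(G)=3$ from tightness and exclude $2K_3$ because the six vertices of two disjoint triangles induce a subgraph with independence number at most $2$; you exclude $K_4$ by forcing every pair $x,y$ outside the clique to be nonadjacent, which makes $V(G)\setminus Q$ an independent set of size $k+2\ge 6$. Your extra observation that some vertex of $Q$ must also be nonadjacent to both $x$ and $y$ is true but not needed.
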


\begin{proof}
	Since $n=k+6$, by Proposition~\ref{prop2}, for any induced subgraph $ H $ of $ G $ on $ 6 $ vertices, we have
	\[
	\alpha(G)=\left\lfloor \frac{6+1}{2}\right\rfloor=3.
	\]
	
	First, suppose to the contrary that $G$ contains two vertex-disjoint triangles. Then the
	subgraph induced by their six vertices has independence number at most
	$2$, contradicting Proposition~\ref{prop2}. Hence $G$ is
	$2K_3$-free.
	
	Next, suppose that $G$ contains a copy $Q$ of $K_4$. Let
	$x,y\in V(G)\setminus V(Q)$ be arbitrary. Then $G[V(Q)\cup\{x,y\}]$ has
	six vertices and hence has independence number $3$. Since $Q$ is a
	complete graph, an independent set of size $3$ in
	$G[V(Q)\cup\{x,y\}]$ must contain both $x$ and $y$. Therefore
	$xy\notin E(G)$. Since $x,y$ are arbitrary, the set
	$V(G)\setminus V(Q)$ is independent. But this set has size
	\[
	n-4=k+2\geq 6,
	\]
	contradicting $\alpha(G)=3$. Hence $G$ is $K_4$-free.
\end{proof}

Let $ N_G(v) $ be the set of vertices adjacent to $ v $ in $ G $, which are called the  \textit{neighbors} of $ v $. The \textit{degree} of $ v $ in $ G $, denoted by $ d_G(v) $, is the number of the vertices in $ N_G(v) $. We will drop the subscript $ G $ if there is no confusion.

\begin{lem}\label{lem:no-ten}
	There is no tight $(4,0)$-stable graph on $10$ vertices.
\end{lem}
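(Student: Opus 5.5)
The plan is to combine the local constraints of Lemma~\ref{lem:c6} with the global structure given by Proposition~\ref{prop1} and Theorem~\ref{thm:k3}, and derive a contradiction. Suppose $G$ is a tight $(4,0)$-stable graph on $10$ vertices. By Lemma~\ref{lem:c6} (with $k=4$, $c=6$) we have $\alpha(G)=3$ and $G$ is $K_4$-free and $2K_3$-free. By Proposition~\ref{prop2}, every induced subgraph on $6$ vertices has independence number exactly $3$. Note that Theorem~\ref{thm:Ramsey2K3} alone does not finish the job, since $R(2K_3,K_4)=11>10$. So the $6$-subset condition must be used essentially.

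\textbf{Degree bounds.} First I will get local degree information. Fix $v$, let $N=N(v)$ and $M=V(G)\setminus(N\cup\{v\})$.
\begin{itemize}
\item $\alpha(G[M])\le 2$, since an independent $3$-set in $M$ together with $v$ would give $\alpha(G)\ge 4$.
\item Any $6$-subset of $M$ would then have independence number at most $2$, contradicting Proposition~\ref{prop2}. Hence $|M|\le 5$, i.e.\ $d(v)\ge 4$.
\item $G[N]$ is triangle-free, because $G$ is $K_4$-free.
\item Any $5$ vertices of $N$ together with $v$ form a $6$-set in which $v$ lies in no independent set of size $\ge 2$. Hence every $5$-subset of $N$ has independence number $\ge 3$, while $\alpha(G[N])\le 3$.
\end{itemize}
A triangle-free graph on $d$ vertices with independence number at most $3$ has $d\le 8$, since $R(3,4)=9$. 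Combining these, I expect to pin $d(v)$ into a narrow range, likely $4\le d(v)\le 6$. The extra input is that $v$ plus an edge of $N$ gives a triangle, and two disjoint such triangles are forbidden by $2K_3$-freeness. In particular, if $G[N]$ has two disjoint edges $ab$ and $cd$, then $vab$ and any triangle avoiding $v,a,b$ must not coexist. This should force strong restrictions on the triangles of $G$.

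\textbf{Main step via the nine-vertex structure.} For each vertex $v$, $G-v$ is a tight $(3,0)$-stable graph on $9$ vertices by Proposition~\ref{prop1}. By Theorem~\ref{thm:k3} (the order rules out $K_4$, $K_5$, $H_7$), $G-v$ has a spanning subgraph isomorphic to $H_9$ or $T_9$. I will then do a case analysis on the two cases:
\begin{enumerate}
\item[(a)] \textbf{$G-v\supseteq H_9$.} Use the explicit structure of $H_9$, in particular its triangles and maximum independent sets. The new vertex $v$ must satisfy $d(v)\ge 4$. Its neighbourhood must be triangle-free and must meet every independent $3$-set of $G-v$, since $\alpha(G)=3$. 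Adding $v$ must also create neither a $K_4$ nor a $2K_3$. I will check that these requirements are incompatible.
\item[(b)] \textbf{$G-v\supseteq T_9$.} The same analysis applies with $T_9$ in place of $H_9$.
\end{enumerate}
A subtlety is that $G-v$ may contain extra edges beyond the spanning copy. Those extra edges must themselves keep $G-v$ $K_4$-free, keep $\alpha(G-v)=3$, and keep $G-v$ $2K_3$-free, which should severely restrict them. Since $v$ is arbitrary, the case analysis may also be applied at a well-chosen $v$, for instance one of minimum degree, to reduce the cases.

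\textbf{Main obstacle.} I expect the hardest part to be exactly this extension analysis: controlling the possible extra edges of $G-v$ over $H_9$ or $T_9$, and then the placement of $N(v)$. The key tool I would try first is the $2K_3$-freeness. Both $H_9$ and $T_9$ contain several triangles, and any edge inside $N(v)$ yields a triangle through $v$, which must meet every triangle of $G-v$. Together with the requirement that $N(v)$ be a triangle-free transversal of all independent $3$-sets of $G-v$, I expect only a handful of candidate neighbourhoods to remain. Each of these can then be killed by exhibiting an explicit $6$-set with independence number at most $2$, contradicting Proposition~\ref{prop2}.
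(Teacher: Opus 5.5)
There is a genuine gap. Your skeleton is the paper's: pass to $G-v$, which contains a spanning $H_9$ or $T_9$, and use $K_4$-freeness, $2K_3$-freeness and $\alpha(G)=3$ to restrict $N(v)$. But the proposal stops exactly where the proof begins. The sentence ``I will check that these requirements are incompatible'' carries the entire content of the lemma. The facts you do establish ($d(v)\ge 4$, $G[N(v)]$ triangle-free, $d(v)\le 8$) come nowhere near a contradiction.

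What must be supplied is the concrete use of the two nine-vertex graphs.
For $T_9$, the triangle $v_2v_9v_{10}$ is disjoint from the hexagon $v_3v_4v_6v_8v_7v_5$. So $2K_3$-freeness makes the colour classes $\{v_3,v_6,v_7\}$ and $\{v_4,v_5,v_8\}$ independent in $G$, and forbids $v$ from seeing two consecutive hexagon vertices. Then $\alpha(G)=3$ forces $N(v)$ to meet both classes, so $N(v)$ meets the hexagon in exactly an antipodal pair. $K_4$-freeness allows at most two of $v_2,v_9,v_{10}$, whence $d(v)\le 4$.
For $H_9$, the $5$-cycles $v_2v_5v_7v_{10}v_9$ and $v_3v_6v_8v_9v_{10}$ avoid the triangles $v_3v_4v_6$ and $v_2v_4v_5$ respectively. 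A short case check then gives $|N(v)\setminus\{v_4\}|\le 3$, so $d(v)\le 4$, with equality only if $v\sim v_4$.

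Even with these bounds you need a closing mechanism, and the one you float points the wrong way. The paper takes $v$ of \emph{maximum} degree, so $d(v)=\Delta(G)\ge\Delta(T_9)=\Delta(H_9)=4$, and hence $d(v)=\Delta(G)=4$.
\begin{itemize}
\item In the $T_9$ case, $v$ cannot be adjacent to $v_2,v_9,v_{10}$, which already have degree $4$. That leaves at most two neighbours, contradicting $d(v)=4$.
\item In the $H_9$ case, $v\sim v_4$ gives $d(v_4)\ge 5>\Delta(G)$.
\end{itemize}
For a vertex of minimum degree, $d(v)=4$, $v\sim v_4$ and $d(v_4)\ge 5$ are perfectly consistent, so nothing follows.

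Your bound $d(v)\ge 4$ for all $v$ can replace the maximum-degree choice, but only at the price of more work. After forcing $d(v)=4$, you must exhibit, for each surviving neighbourhood, either an independent triple of $G$ outside $N(v)\cup\{v\}$ (its non-edges again certified by $2K_3$-freeness) or a $6$-set of independence number $2$.

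Finally, the extra edges of $G-v$ that you single out as the main obstacle never need to be classified. Every obstruction used survives the addition of edges: triangles of the spanning copy, non-edges forced by $2K_3$-freeness of $G$ itself, a $6$-set whose independence number is already at most $2$ in the spanning copy, and lower bounds on degrees. This is why the paper can argue with a spanning subgraph throughout.
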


\begin{proof}
	Suppose to the contrary that such a graph $G$ exists. Then
	\[
	\alpha(G)=\left\lfloor \frac{10-4+1}{2}\right\rfloor=3,
	\]
	Moreover, by Proposition~\ref{prop2}, for every $X\subseteq V(G)$ with $|X|=6$, we have
	\[
	\alpha(G[X])=3.
	\]
	By Lemma \ref{lem:c6}, $ G $ is $ K_4 $-free and $ 2K_3 $-free.
	
	Choose a vertex $v_1\in V(G)$ with
	\[
	d_G(v_1)=\Delta(G).
	\]
	
    By Proposition~\ref{prop1}, the graph $G-v_1$ is a tight $(3,0)$-stable graph on $9$
	vertices. Hence $G-v_1$ must contain a spanning subgraph isomorphic to either $T_9$ or $H_9$ (as shown in Figure \ref{t6}(a) and (b)).
	
	We first suppose that $ G-v_1 $ contains a spanning subgraph isomorphic to $ T_9 $, labelled as in Figure~\ref{t1}. Note that $T_9$ contains the triangle $ v_2v_9v_{10} $ and $ G $ is $2K_3$-free, then any edge with ends of distance 2 on cycle $ C_{v_3v_4v_6v_8v_7v_5} $ is not in $ E(G) $, otherwise results a triangle disjoint from $ v_2v_9v_{10} $, a contradiction. That is, $ \{v_3, v_6, v_7\} $ and $ \{v_4, v_5, v_8\} $ are independent sets in $ G $. Moreover, the vertex $ v_1 $ cannot be adjacent to both ends of any edge in cycle $ C_{v_3v_4v_6v_8v_7v_5} $ that would form, together with $ v_1 $, a triangle disjoint from $ v_2v_9v_{10} $.  In particular,
	
	\begin{equation}\label{e1}
		 |N_G(v_1)\cap \{v_3,v_4,v_5,v_6,v_7,v_8\}|\leq3,
	\end{equation}	
 and 
 \begin{equation}\label{e2}
 	\begin{split}
 	&|N_G(v_1)\cap \{v_3,v_4,v_5,v_6,v_7,v_8\}|=3,~\text{if and only if}\\
 	&N_G(v_1)\cap \{v_3,v_4,v_5,v_6,v_7,v_8\}=\{v_3, v_6, v_7\}~\text{or}\\
 	&N_G(v_1)\cap \{v_3,v_4,v_5,v_6,v_7,v_8\}=\{v_4, v_5, v_8\}.
 	\end{split}
 \end{equation}

	\begin{figure}[!htp]
		\centering
		\includegraphics[width=4cm]{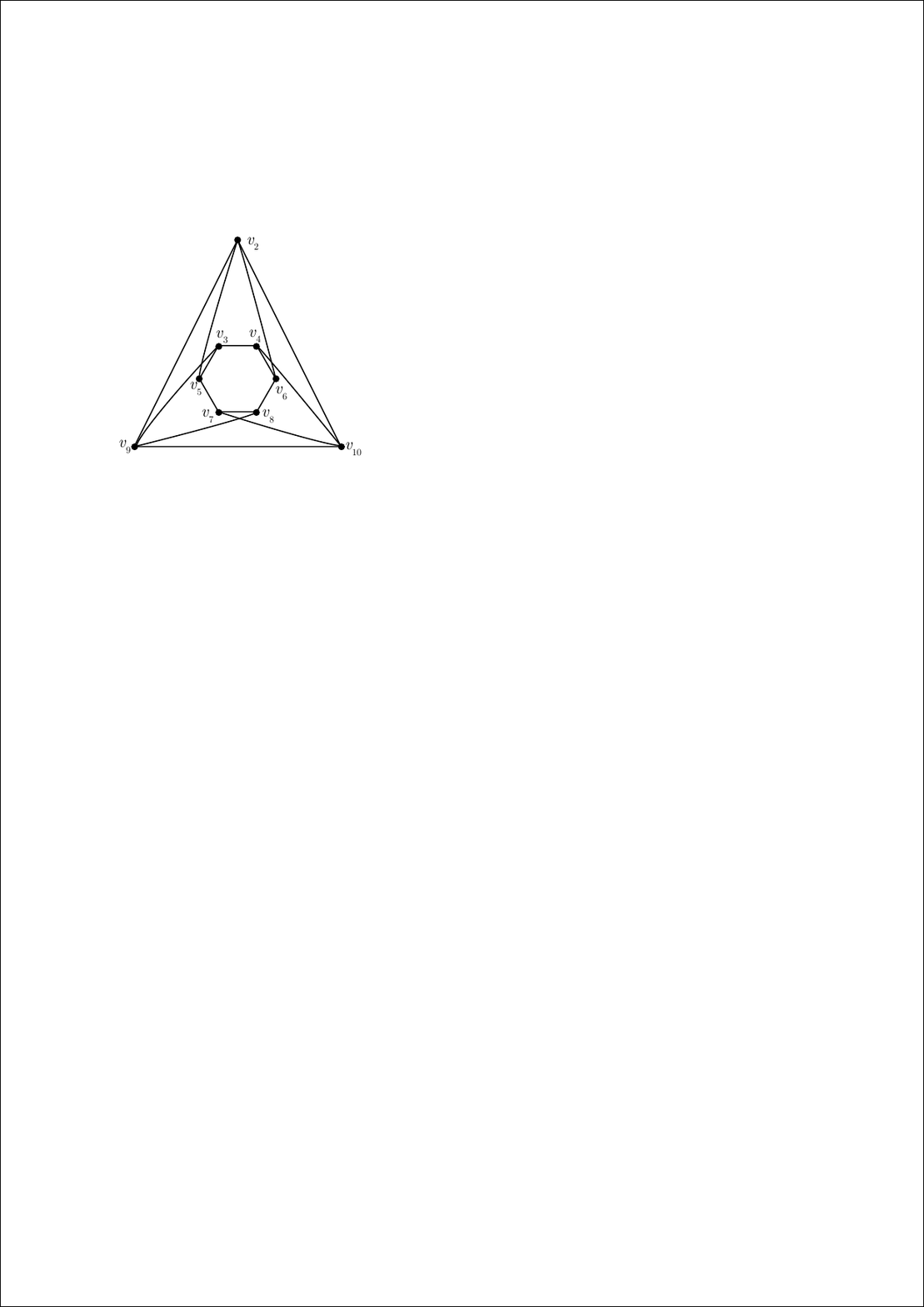}
		\caption{The labelled graph $ T_9 $.}
		\label{t1}
	\end{figure}
	
	Thus $d_G(v_1)\leq 6$. Note that $d_G(v_1)\geq4$, since $ \Delta(T_9)=4 $. In fact, $ d_G(v_1)\geq5 $.
	
	If $d_G(v_1)=4=\Delta(G)$, then $ N_G(v_1)\cap \{v_2,v_9,v_{10}\}=\emptyset $ as $d_{T_9}(v_2)=d_{T_9}(v_9)=d_{T_9}(v_{10})=4 $. Hence $ N_G(v_1)\cap \{v_3,v_4,v_5,v_6,v_7,v_8\}=4 $, which contradicts Inequality~(\ref{e1}).
	
	Moreover, 
	\[ N_G(v_1)\cap \{v_3, v_6, v_7\}\neq\emptyset \quad\text{and}\quad N_G(v_1)\cap \{v_4, v_5, v_8\}\neq\emptyset. \tag{3}
	\]
	Otherwise, if $ N_G(v_1)\cap \{v_3, v_6, v_7\}=\emptyset $, then $ \{v_1, v_3, v_6, v_7\} $ is an independent set in $ G $, which contradicts the condition that $ \alpha(G)=3 $. Similarly, $ N_G(v_1)\cap \{v_4, v_5, v_8\}=\emptyset $.
	
	Inequalities (1)-(3) implies that
	
	(i) $d_G(v_1)\neq 6$,
	
	(ii) $d_G(v_1)=5$, and we may assume that $ N_G(v_1) \cap \{v_3, v_4, v_5, v_6, v_7, v_8\}=\{v_3, v_8\} $ by automorphism. Thus
	\[
	 N_G(v_1)=\{v_2, v_3, v_8, v_9, v_{10}\}, 
	\]
	corresponding to the graph $ T_{10} $ shown in Figure~\ref{g2}. However, $ \alpha(T_{10}[\{v_1, v_2, v_3, v_4, v_6, v_9\}])=2 $, a contradiction. 
	
	\begin{figure}[!htp]
		\centering
		\includegraphics[width=4cm]{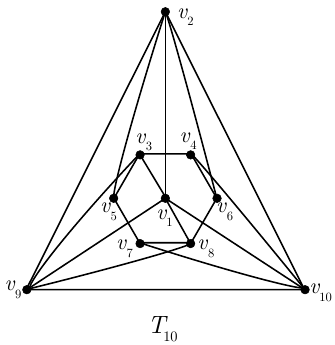}
		\caption{Graph $ T_{10} $. }
		\label{g2}
	\end{figure}

    This contradiction excludes the case in which $G-v_1$ contains a spanning copy of $T_9$.
	
	It remains to consider the possibility that $ G-v_1 $ contains a spanning subgraph isomorphic to $ H_9 $, labelled as in Figure~\ref{g3}. Note that $d_G(v_1)\geq4$, since $ \Delta(H_9)=4 $. We now show that $d_G(v_1)=4$. Observe that $H_9$ contains the triangles $ v_2v_4v_5 $ and $ v_3v_4v_6 $. Since $ G $ is $ 2K_3 $-free, 
	\[ |N_G(v_1)\cap V(C_{v_2v_5v_7v_{10}v_9})|\leq2\quad\text{and}\quad |N_G(v_1)\cap V(C_{v_3v_6v_8v_9v_{10}})|\leq2.
	\]
	Without loss of generality, assume that 
	\[ N_G(v_1)\cap V(C_{v_2v_5v_7v_{10}v_9})=\{v_2, v_7\},~ \{v_5, v_{10}\}~ \mathrm{or}~ \{v_7, v_9\}.
	\]
(i) If $ N_G(v_1)\cap V(C_{v_2v_5v_7v_{10}v_9})=\{v_2, v_7\} $, then 
\[
v_1v_5,~v_1v_8,~v_1v_9,~v_1v_{10}\notin E(G),
\]	
	and 
	\[
	|N_G(V_1)\cap\{v_3, v_6\}|\leq1.
	\]

\noindent(ii) If $ N_G(v_1)\cap V(C_{v_2v_5v_7v_{10}v_9})=\{v_5, v_{10}\} $, then 
\[
v_1v_2,~v_1v_3,~v_1v_7,~v_1v_9\notin E(G),
\]	
and 
\[
|N_G(V_1)\cap\{v_6, v_8\}|\leq1.
\]

\noindent(iii) If $ N_G(v_1)\cap V(C_{v_2v_5v_7v_{10}v_9})=\{v_7, v_9\} $, then 
\[
v_1v_2,~v_1v_5,~v_1v_8,~v_1v_{10}\notin E(G),
\]	
and 
\[
|N_G(V_1)\cap\{v_3, v_6\}|\leq1.
\]
	Therefore
	\[
	|N_G(v_1)\setminus\{v_4\}|\leq 3,
	\]
	and hence
	\[
	d_G(v_1)\leq 4.
	\]
	
	Thus $d_G(v_1)=4$, and $ v_4\in N_G(v_1), $
	that is,
	\[
	v_1v_4\in E(G).
	\]
	This implies $\Delta(G)\geq d_G(v_4)\geq d_{H_9}(v_4)+1=5$, which contradicts the fact that $ \Delta(G)=4 $.
	
	Thus the case in which $ G-v_1 $ contains a spanning copy of $ H_9 $ is impossible. Therefore no tight $(4,0)$-stable graph on $10$ vertices exists.
\end{proof}

\begin{figure}[!htp]
\centering
\includegraphics[width=3cm]{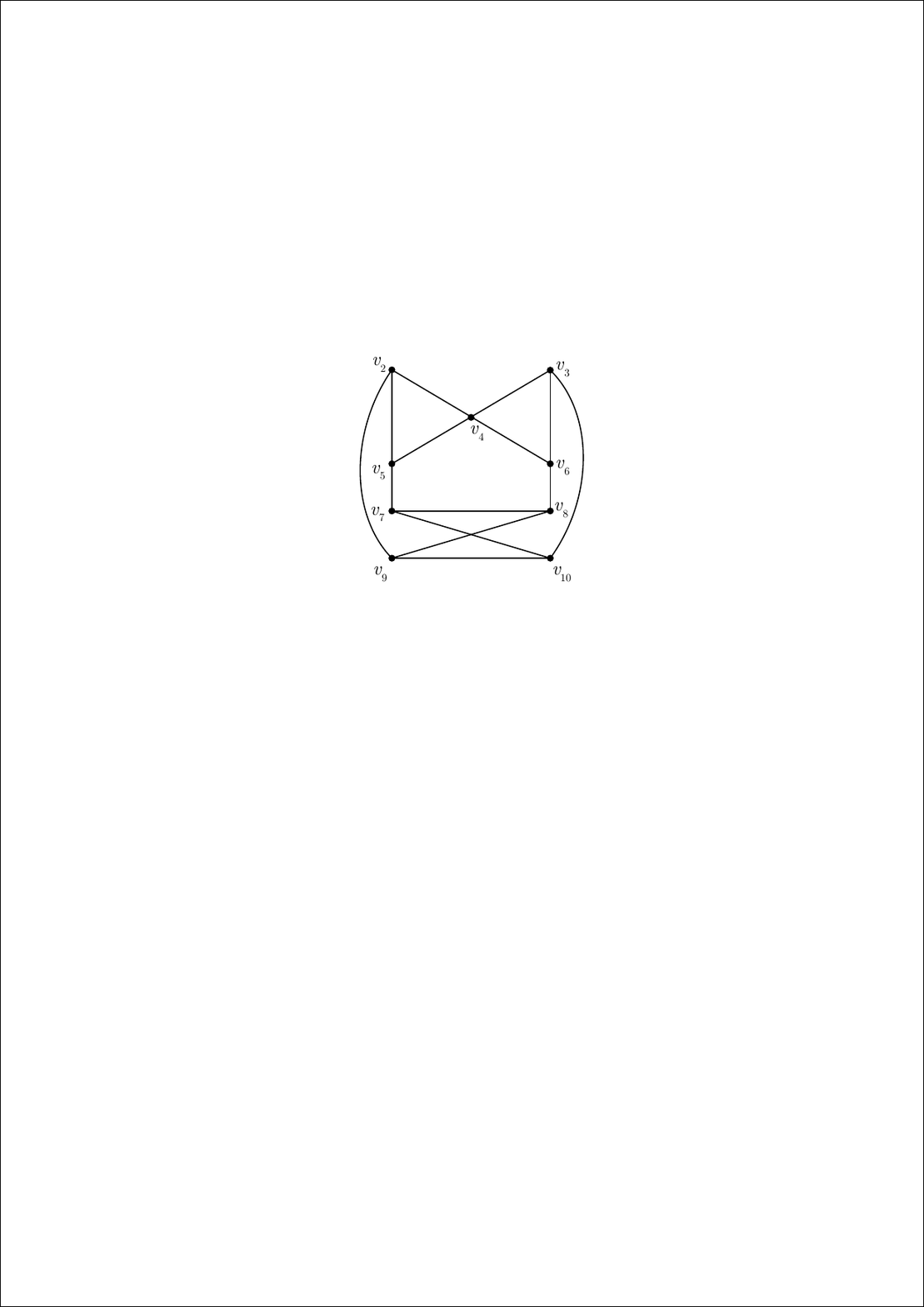}
\caption{Graph $ H_9 $. }
\label{g3}
\end{figure}

\begin{lem}\label{lem:eig}
	Let $G$ be a tight $ (4, 0) $-stable graph with order 8.
	Then $G$ is isomorphic to one of $H_8^1, H_8^2, H_8^3$ shown in Figure \ref{t0}.
\end{lem}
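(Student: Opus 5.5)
The plan is to exploit Proposition~\ref{prop1} and Proposition~\ref{prop2} in the same way as in the proof of Lemma~\ref{lem:no-ten}. Let $G$ be a tight $(4,0)$-stable graph on $8$ vertices, so $c=4$.

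\textbf{Step 1: the basic constraints.} By Proposition~\ref{prop2},
\[
\alpha(G)=\left\lfloor \tfrac{4+1}{2}\right\rfloor=2,
\]
and every induced subgraph on $4$ vertices has independence number exactly $2$. Two consequences follow directly.

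\begin{enumerate}
\item[(a)] $G$ is $K_4$-free, since a $K_4$ has independence number $1$.
\item[(b)] $\overline{G}$ is triangle-free, since $\alpha(G)=2$.
\end{enumerate}

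Condition (b), together with $R(K_3,K_3)=6$, already forces $G$ to contain many triangles. More precisely, the four-vertex condition says that any four vertices induce a subgraph that is not complete. So $G$ is exactly an 8-vertex graph with $\omega(G)\le 3$ and $\alpha(G)\le 2$, and conversely every such graph is tight $(4,0)$-stable. Thus the problem reduces to classifying the $(K_4,\overline{K_3})$-Ramsey graphs on $8$ vertices. Theorem~\ref{thm:RG} ($R(K_4,K_3)=9$) guarantees that such graphs exist only up to order $8$, so we are at the extremal order. The claim then amounts to showing there are exactly three of them, $H_8^1,H_8^2,H_8^3$.

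\textbf{Step 2: degree bounds and the structural setup.} For each vertex $v$:
\begin{itemize}
\item $N(v)$ is triangle-free and has independence number at most $2$, so $|N(v)|\le 5$, with equality only if $G[N(v)]\supseteq C_5$ (because $R(3,3)=6$).
\item The non-neighbourhood $V\setminus N[v]$ is a clique (otherwise two non-adjacent non-neighbours together with $v$ form an independent triple), so it has at most $3$ vertices.
\end{itemize}
Since $d(v)+1+|V\setminus N[v]|=8$, these give $4\le d(v)\le 5$ for every $v$. Hence $G$ has minimum degree at least $4$ and maximum degree at most $5$.

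Alongside this I would use Proposition~\ref{prop1} and Theorem~\ref{thm:k3}: $G-v$ is tight $(3,0)$-stable on $7$ vertices, so it contains a spanning $H_7$. Fix a vertex $v_1$, label $G-v_1\supseteq H_7$ as in Figure~\ref{t6}, and determine $N(v_1)$. The constraints on $N(v_1)$ are:
\begin{itemize}
\item its complement in $V(H_7)$ is a clique of size $2$ or $3$ in $G$;
\item $N(v_1)$ contains no triangle;
\item $N(v_1)$ meets every independent pair of $G-v_1$ whose union with $v_1$ would otherwise be independent.
\end{itemize}

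\textbf{Step 3: case analysis.} Split into two cases.
\begin{itemize}
\item $\Delta(G)=5$: choose $v_1$ of degree $5$, so $G[N(v_1)]\supseteq C_5$ and the $2$ non-neighbours are adjacent.
\item $G$ is $4$-regular: each non-neighbourhood is a triangle.
\end{itemize}
In each case, enumerate the admissible placements of $N(v_1)$ on the labelled $H_7$, up to automorphisms of $H_7$. Then decide which of the optional extra edges of $G-v_1$ beyond $H_7$ can be added without creating a $K_4$. This should yield exactly the three graphs of Figure~\ref{t0}. I expect them to be the 4-regular complement of $C_8(1,4)$-type, i.e.\ $C_8(1,4)$, together with two denser variants.

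Finally, verify directly that each of $H_8^1,H_8^2,H_8^3$ is $K_4$-free with $\alpha=2$, and check that they are pairwise non-isomorphic, for instance by their degree sequences and numbers of triangles.

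\textbf{Main obstacle.} The hard part is the enumeration in Step 3. One has to handle isomorphism carefully so that the final list is complete and free of duplicates, and one must not overlook spanning supergraphs of $H_7$. To control this, I would first fix the complement graph $\overline{G}$, which is triangle-free on $8$ vertices with no independent $4$-set and maximum degree at most $3$, and classify those instead. That is a smaller and cleaner search.
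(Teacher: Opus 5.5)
\textbf{There is a genuine gap: the classification itself is never carried out.}

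Your Steps 1 and 2 are correct, and the reformulation is a good one. An $8$-vertex graph is tight $(4,0)$-stable exactly when $\omega(G)\le 3$ and $\alpha(G)\le 2$; this also gives the converse direction, which the paper only calls clear. The facts $4\le d(v)\le 5$, $G[N(v)]\cong C_5$ when $d(v)=5$, and ``$V\setminus N[v]$ is a clique'' all hold.

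But the lemma \emph{is} the classification. Step 3 only asserts that the enumeration ``should yield exactly the three graphs'', and your last paragraph defers it again to a search over $\overline{G}$ that you do not perform. Nothing in the proposal rules out a fourth graph, so the statement is not yet proved.

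Your predicted answer is also wrong as written. $C_8(1,4)$ (the Wagner graph) is $3$-regular with independence number $3$, so it cannot be $G$. In the $4$-regular case it is $\overline{G}$ that is $C_8(1,4)$, i.e.\ $G\cong C_8(2,3)$.

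\textbf{How the paper closes it.} The paper does the enumeration along your first route. Take $v_1$ of maximum degree with $G-v_1\supseteq H_7$; $K_4$-freeness against two triangles of $H_7$ gives $d(v_1)\le 5$. If $d(v_1)=5$, further triangles pin down $N(v_1)$ up to symmetry. Then $\alpha(G)=2$ forces one new edge, $K_4$-freeness forbids seven others and allows at most one of a remaining pair, giving $H_8^1$ or $H_8^2$. If $\Delta(G)=4$, the saturated vertex $v_2$ forces $N(v_1)$ and yields $H_8^3$.

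\textbf{Finishing your complement route.} This route is actually shorter, so you should complete it. Put $F=\overline{G}$. Then $F$ is triangle-free, $\alpha(F)\le 3$, and $2\le\delta(F)\le\Delta(F)\le 3$.

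\emph{Case $\delta(F)=2$.} Suppose $N_F(v)=\{a,b\}$. Then $F-N_F[v]\cong C_5$ and $ab\notin E(F)$. The sets $N_F(a)$ and $N_F(b)$ meet this $C_5$ in disjoint independent sets of size at most $2$. Since $\alpha(F)\le 3$, their union must meet every non-adjacent pair of the $C_5$. The only solutions have sizes $2+1$ (the Wagner graph minus two consecutive spokes) and $2+2$ (the Wagner graph minus one spoke).

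\emph{Case $F$ cubic.} For any $v$, the $4$-vertex graph $F-N_F[v]$ is triangle-free, has independence number at most $2$, and has exactly $3$ edges, so it is $P_4$. The attachments of the three independent neighbours of $v$ are then forced, and the result is the Wagner graph.

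Complementing these three graphs gives the two graphs with $\Delta(G)=5$ (namely $H_8^1,H_8^2$) and the $4$-regular one ($H_8^3$).
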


\begin{proof}
	Clearly, $H_8^1, H_8^2, H_8^3$ are tight $ (4, 0) $-stable graphs.
Since $G$ is tight $(4,0)$-stable, $ \alpha(G)=\alpha(G-S)=2 $ for every $ S\subseteq V(G) $ with $ |S|=4 $ by Proposition~\ref{prop2}. 

Choose a vertex $v_1\in V(G)$ with
\[
d_G(v_1)=\Delta(G).
\]
By Proposition~\ref{prop1}, $G-v_1$ is a tight
$(3,0)$-stable graph on $7$ vertices. Hence, by Theorem \ref{thm:k3}, $G-v_1$ contains a spanning subgraph
isomorphic to $H_7$. We label this copy as in Figure~\ref{t4}(a). 

	Clearly, $ \Delta(G)\geq \Delta(H_7)\geq 4$. Both $ v_3v_5v_7 $ and $ v_4v_6v_8 $ are triangles in $ H_7 $. Since $ G $ is $ K_4 $-free, 
	\[ |N_G(v_1)\cap \{v_3, v_5, v_7\}|\leq2\quad\text{and}\quad |N_G(v_1)\cap \{v_4, v_6, v_8\}|\leq2.  \tag{4}
\]
Hence
\[
d_G(v_1)\leq 5.
\]
	Therefore,
\[
\Delta(G)\in\{4,5\}.
\]

We now distinguish two cases.

\textbf{Case 1. $\Delta(G)=5$.}

By above discussion, $ v_2\in N_G(v_1) $. Note that $ v_3v_5v_7 $ and $ v_2v_4v_6 $ are triangles in $ H_7 $. Then $ |N_G(v_1)\cap \{v_3, v_5\}|\leq1 $ and $ |N_G(v_1)\cap \{v_4, v_6\}|\leq1 $ since $ G $ is $ K_4 $-free. Without loss of generality, we assume that $v_1$ is not adjacent to $v_5$ and $v_6$ by automorphism. Thus $ N_G(v_1)=\{v_2,v_3,v_4,v_7,v_8\} $. 

Since \(\alpha(G)=2\), we must have \(v_5v_6\in E(G)\); otherwise
\(\{v_1,v_5,v_6\}\) would be an independent set in $ G $.  Furthermore, the
\(K_4\)-freeness of \(G\) implies that
\[
v_3v_4,\ v_2v_7,\ v_2v_8,\ v_4v_7,\ v_3v_8,\ v_3v_6,\ v_4v_5 \notin E(G).
\]
and $|\{v_5v_8, v_6v_7\} \cap E(G)| \leq 1$. 
If $|\{v_5v_8, v_6v_7\} \cap E(G)| =0 $, then $ G\cong H_8^1 $. Assume that $|\{v_5v_8, v_6v_7\} \cap E(G)| =1 $. Clearly, $ H_8^2=H_8^1+v_6v_7\cong H_8^1+v_5v_8 $. Hence $G\cong H_8^1$ or $G\cong H_8^2$.

\begin{figure}[!htp]
	\centering
	\includegraphics[width=12cm]{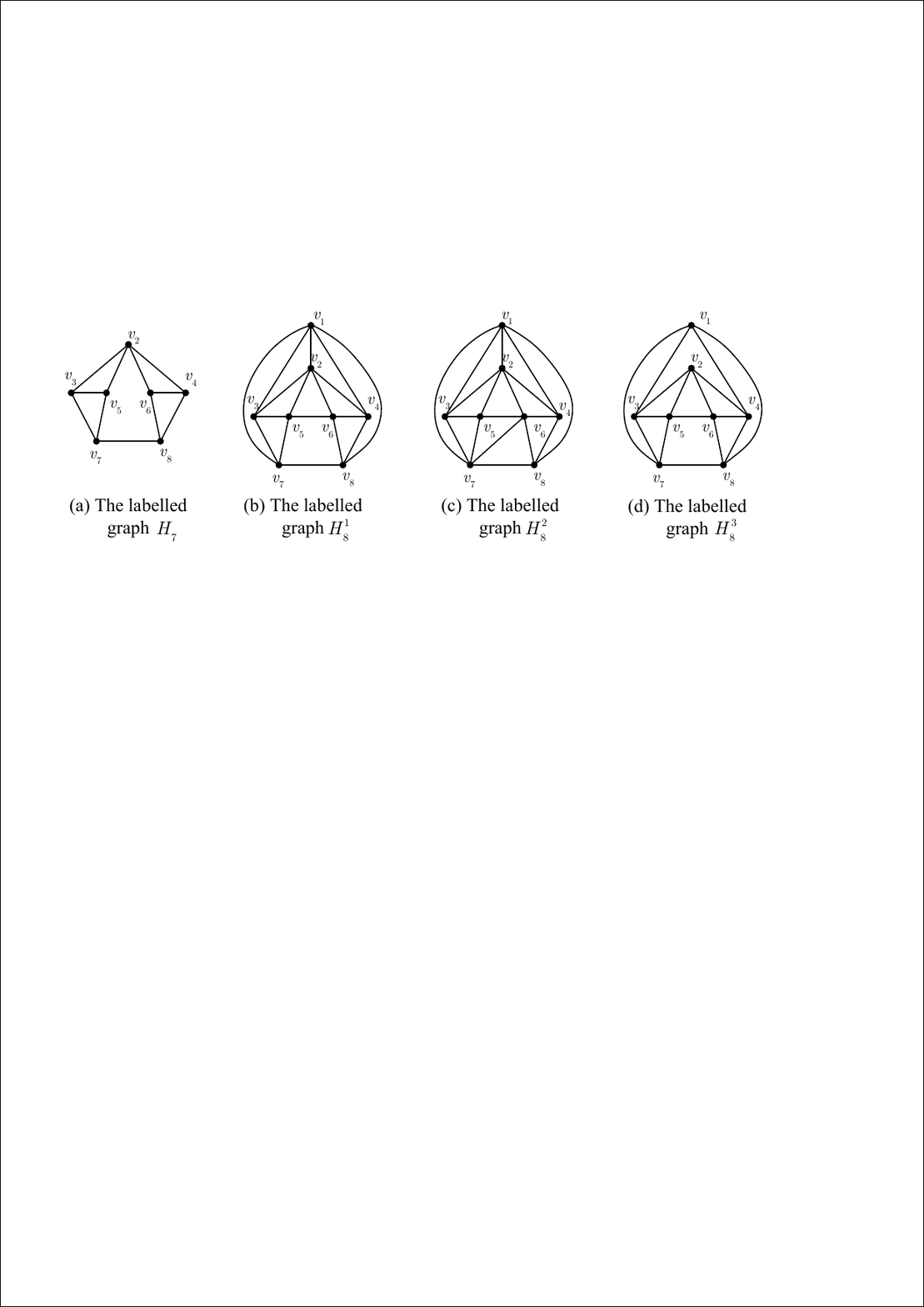}
	\caption{Graphs $ H_7, H_8^1 $, $ H_8^2 $ and $ H_8^3 $.}
	\label{t4}
\end{figure}

\textbf{Case 2.} $\Delta(G) = 4$.

Note that $ d_{H_7}(v_2)=4 $. Then $ v_2\notin N_G(v_1), v_2v_7, v_2v_8\notin E(G) $. Hence $ \{v_7, v_8\}\subseteq N_G(v_1) $. Otherwise, $ \{v_1, v_2, v_7\} $ or $ \{v_1, v_2, v_8\} $ is an independent set in $ G $.  By Inequality $ (4) $ and automorphism, we may assume $ N_G(v_1)=\{v_3, v_4, v_7, v_8\} $, since $d_G(v_1)=\Delta(G) = 4$, see $ H_8^3 $ shown in Figure \ref{t4}(d). It is easy to see $ d_{H_8^3}(v_i)=\Delta(G) = 4 $ for all $ v_i\in\{v_1, v_2, \ldots, v_6\} $. However $ v_7v_8\in E(G) $, thus $G\cong H_8^3$. 

In conclusion, the result holds.
\end{proof}

We now prove Theorem \ref{thm:main}.

\begin{proof}[Proof of Theorem~\ref{thm:main}]
Let $G$ be a tight $(k,0)$-stable graph on $n=k+c$ vertices. By Theorem~\ref{thm:k3},
\[
c\in\{1,2,4,6\}.
\]

If $c=1$ or $c=2$, then
\[
\alpha(G)=\left\lfloor \frac{c+1}{2}\right\rfloor=1.
\]
Thus $G$ is complete. Hence $G\cong K_{k+1}$ when $c=1$, and
$G\cong K_{k+2}$ when $c=2$.

Now assume that $c=4$. Then
\[
\alpha(G)=\left\lfloor \frac{5}{2}\right\rfloor=2.
\]
By Proposition~\ref{prop2}, every induced subgraph of $G$ on
four vertices has independence number exactly $2$. In particular, $G$ is
$K_4$-free.

If $k\geq 5$, then $n=k+4\geq 9$. By Theorem~\ref{thm:RG}, every
graph on at least nine vertices contains either a $K_4$ or an independent
set of size $3$. This contradicts the facts that $G$ is $K_4$-free and
$\alpha(G)=2$. Hence $k=4$. By Lemma~\ref{lem:eig}, the conclusion holds.

Finally, assume that $c=6$. Then Lemma~\ref{lem:c6} implies
that $G$ is both $K_4$-free and $2K_3$-free. By Lemma~\ref{lem:no-ten}, $k\geq 5$. Then
$n=k+6\geq 11$. Since $\alpha(G)=3$, the complement $\overline{G}$ contains no copy of $K_4$.
Applying Theorem~\ref{thm:Ramsey2K3} to $\overline{G}$, we conclude that $G$ contains a copy of $2K_3$.
This contradicts Lemma~\ref{lem:c6}. Hence, $c\neq6$. This completes the proof.
\end{proof}

\clearpage

\end{document}